\documentclass[11]{article}

\usepackage{amssymb, amsmath, verbatim, amsthm,url, multirow,fullpage,mathtools, appendix, cancel, graphicx, subcaption, verbatim, multirow}
\usepackage{longtable, rotating,makecell,array}
\usepackage[aligntableaux=top]{ytableau}

\setlength{\parindent}{0pt}
\setlength{\parskip}{1.5ex plus 0.5ex minus 0.2ex}


\usepackage{soul}
\usepackage[colorinlistoftodos,textsize=footnotesize]{todonotes}
\newcommand{\hlfix}[2]{\texthl{#1}\todo{#2}}

\newcommand{\sanote}{\todo[color=violet!30]}
\newcommand{\fknote}{\todo[color=orange!10]}

\setstcolor{red}


\newcommand{\Z}{\mathbb{Z}}

\newcommand{\R}{\mathbb{R}}

\newcommand{\namedfilename}{namedminus2-2p}
\newcommand{\fullfilename}{fullminus1-2p}




\def\ba #1\ea{\begin{align} #1 \end{align}}
\def\bas #1\eas{\begin{align*} #1 \end{align*}}
\def\bml #1\eml{\begin{multline} #1 \end{multline}}
\def\bmls #1\emls{\begin{multline*} #1 \end{multline*}}

\newcommand{\varep}{\varepsilon}

\newcommand{\vrho}{\vec{\rho}}

\newcommand{\namedtwo}{G_{\textrm{named}}^{(-2)}}
\newcommand{\fullone}{G_{\textrm{full}}^{(-1)}}
\newcommand{\rB}{\mathcal{B}}

\newtheorem{thm}{Theorem}[section]

\newtheorem{lem}[thm]{Lemma}

\theoremstyle{remark}

\theoremstyle{definition}
\newtheorem{dfn}[thm]{Definition}
\newtheorem{rmk}[thm]{Remark}


\title{A Geometric Chung Lu model and the Drosophila Medulla connectome}

\author{Susama Agarwala\footnote{Johns Hopkins University, Applied Physics Lab; Laurel, MD 20723. susama.agarwala@jhuapl.edu}\\
        Franklin Kenter\footnote{Mathematics Department, United States Naval Academy; Annapolis, MD 21402. kenter@usna.edu} \\
}

\date{}

\begin{document}
\maketitle


\begin{abstract}
    Many real world graphs have edges correlated to the distance between them, but, in an inhomogeneous manner. While the Chung-Lu model and the geometric random graph models both are elegant in their simplicity, they are insufficient to capture the complexity of these networks. In this paper, we develop a generalized geometric random graph model that preserves many graph theoretic aspects of these real world networks. We test the validity of this model on a graphical representation of the Drosophila Medulla connectome.
\end{abstract}

\section{Introduction}

In many networks, the presence of an edge between two nodes strongly correlates with the spatial distance between them; this includes infrastructure networks,  transportation networks \cite{spatialsurvey} and connectomes \cite{marmoset,humanspatialconnectome}. Traditionally, scale free networks are used to model these systems, but there is increasing evidence that this is too simplistic  a paradigm \cite{powerlawscarce}. For instance, in large scale connectome networks of regions of the human brain, it has been shown that the empirical probability that an edge exists between two nodes is a function of distance which decays faster than any inverse power decay\cite{humanspatialconnectome}.

To address is shortcoming, we build a composite model for  that is a combination of  two prototypical random graph models: the {\it random geometric graph model} and the {\it Chung-Lu} model. We validate this composite model on the connectome formed by tracing the neurons in a small portion of the Drosophila Medulla\cite{neurodata}. In doing so, we find that we are able to generate a family of graphs that preserve many of the network properties observed in the original connectome in a way that could not have been possible just using one or the other of the two component models. 

There is a lot of evidence in favor of spatial and geometric models for the brain in meso- and macro-scale connectome work \cite{spatialagainst,connectometutorial, spatialfor} that transcends species. However, continuing this work on a micro, i.e. a cellular level has been stymied by the difficulty and expense of obtaining the data. To address this issue of sparse examples of mapped connectomes, we propose a means of constructing a composite model that generates synthetic graphs which share many of the graph characteristics of the graph defined by the Drosophilla medulla \cite{naturenetwork}.



In the random geometric graph model, vertices are randomly embedded into some space with a metric (usually Euclidean); after which, two vertices $i, j$ are adjacent with a probability {\it connection function} based upon the spatial distance between them $d_{ij}$. The simplest case is when an edge is present whenever $d_{ij} < R$ for some threshold value $R$ \cite{hardpenrose}. Though more complicated connection functions can be used depending on the application \cite{softpenrose, generalconnection}. One of the main shortcomings of random geometric graph models is the homogeneity of the vertices which stands in contrast to most real-world networks \cite{estradahetero}. The Chung-Lu model \cite{chunglu} is a non-geometric heterogeneous model where each vertex is randomly assigned a weight $w_i$ from a predetermined distribution. Then, the probability that an edge exists between two vertices is a function of the weights $w_i$ and $w_j$. Under mild conditions, the expected degrees of the vertices are precisely $w_i$, allowing for heterogeneity \cite{chunglu}. This model allows straightforward analysis of a general degree distribution, especially those corresponding to ``power-law'' or ``scale-free'' networks \cite{chunglu}. However, it has been observed that the Chung-Lu model tends to underestimate triangle counts in real-world graphs, and miss estimates degrees for real world networks that do not satisfy the Chung-Lu constraint
\cite{notchunglu}. The connectome studied in this paper (and connectomes in general) has heterogeneous vertices, as well as a geometrically defined connection function, making it an ideal candidate for a composite of these two models.

There are several previously-used methods for combining geometric graphs with the Chung-Lu model. One method is to use a hyperbolic embedding where the connection function $f(d_{ij},w_i,w_j)$ is a function of the hyperbolic distance as well as the two weights. The advantage of using hyperbolic space is that the resulting clustering is similar to that found in social networks \cite{hyperbolicsolcial}. In this model, as the number of vertices $\to \infty$, the limit of the hyperbolic model is the classical Chung-Lu model  \cite{hyperchunglu}. The hyperbolic model can be generalized using a toroidal embedding in different dimensions \cite{bringmann2019geometric}. However, in all cases, the connection function is proportional to  $d_{i,j}^{-\beta}$ for some $\beta$ (i.e. an inverse power law) \cite{bringmann2019geometric}. This stands in contrast to the rate of decay for real-world graphs mentioned previously.  

To address these concerns, in Section \ref{sec:genericmodel}, we introduce a simple yet robust heterogeneous geometric random graph model that is able to consider both the spatial position of the vertices as well as a randomly assigned weight. In the hyperbolic model above, the probability that two nodes are connected, given their distance is prescribed by a simple power law (namely $d_{i,j}^{-\beta}$). In our model, we generalize this to allow for any function that limits to the edge density of the graph as the distance goes to $\infty$ and to the self loop number as the distance goes to $0$. 

Then, in Section \ref{sec:specific}, we adapt this model to consider the boundary effects when the spatial coordinates are restricted to a (usually non-convex) region. Together, this allows us to build synthetic connectomes resulting in similar local and global topological properties as the original as well as maintaining the original spatial characteristics and spectral characteristics.  In Section \ref{sec:connectomesimulation}, we apply our model to part of the connectome of the {\it Drosophila medulla} given in \cite{naturenetwork} wherein synaptic connections are detailed in their exact spatial coordinates in three dimensions. In Section \ref{sec:simulation}, we use a variety of network measures, we show that synthetic connectomes generated under our model are more similar to those created under a pure Chung-Lu model. Finally, Section \ref{sec:othermodels}, we compare the accuracy of our model to other existing models.

\section{Chung-Lu model and existing generalizations \label{sec:litreview}}

For our purposes, the graphs we consider, $G = (V, E)$, are finite and undirected; we do not allow for multiedges but we allow for graphs to have self-loops. Throughout, we fix the number of vertices in the graph, $n = |V|$. We write $i \sim j$ to denote that $i$ is adjacent to $j$. A self-loop occurs whenever $i\sim i$. The degree of a vertex is the number edges incident to it, denoted $\deg_i$; for our purposes, self-loops only contribute 1 toward the degree of a vertex. The {\it edge-density} of a graph is $\varep = \frac{2|E|}{(n+1)n}$.

We work with graphs that are embedded in some geometric space, therefore each vertex has an associated position. We denote the Euclidean distance between two vertices, $i$ and $j$, as $d_{i,j}$. Note that this is the distance between two vertices relative to the geometry the graph is embedded in, not the graph-theoretic distance. 

\subsection{The Original Chung-Lu Model \label{sec:CLorig}}

The Chung-Lu model \cite{chunglu} is a heterogeneous random graph model whose parameter is a single vector of expected degrees: $\ \vec w$ of length $n$. Given this vector, a graph can be generated where the probability that two vertices (not necessarily distinct) are adjacent is 
\[ P(i \sim j | w_i, w_j) = \frac{w_i w_j}{\sum_k w_k}\;, \]
and the event $i \sim j$ is independent from any other pair.
This probability is valid for all pairs $i, j$ if
\ba  (\max_\ell w_\ell)^2 < \sum_k w_k \label{eq:CLcondition}\ea
We refer to equation \ref{eq:CLcondition} as the ``Chung-Lu condition''.

\subsection{Previous Geometric Chung-Lu Models\label{sec:CLgeomother}}

Bringmann, Keusch and Lengler  introduced a Geometric Chung-Lu Model, called Geometric Inhomogeneous Random Graphs (GIRG), \cite{bringmann2019geometric} which generalizes a previous geometrized Chung-Lu model, the hyperbolic model \cite{hyperchunglu}. In this model, $n$ vertices are each assigned a random point chosen uniformly in the $d$-dimensional unit torus, and each vertex is assigned a random weight from $\ \vec w$ of length $n$. The connection function (i.e., the probability of an edge $i\sim j$) is given by
\[ P(i \sim j | d_{ij}, w_i, w_j ) = \min \left \{  \frac{1}{d_{i,j}^{\alpha d}} \cdot \left( \frac{w_i w_j}{W} \right)^{d}, 1 \right \} \]
where $d_{i,j}$ is taken to be the $\infty$-norm on the torus and $W = \sum_k w_k$. As before, this probability is independent for each pair.  Unlike the traditional Chung-Lu model, the vector $\vec w$ is not necessarily the expected degree of each vertex. 
Indeed, under mild conditions for $d$, $\alpha$ and $\vec w$, the probability that two randomly chosen vertices are adjacent given their distance (but without knowing the $w_i$ or $w_j$) is 
\[ P(i\sim j | d_{ij}) = \frac{1}{d_{ij}^{\alpha d}} \cdot \frac{1}{n(n-1)} \cdot W^d  \]

In particular, regardless of $\vec w$, $d$ and $\alpha$, the edge probability decreases according to an inverse power law, $C d_{ij}^{-\beta}$. However, this is not necessarily the case for real-world networks, for instance in meso-scale connectome networks \cite{humanspatialconnectome,connectometutorial,marmoset}.
In Section \ref{sec:othermodels}, we show that this is also not true for the micro scale connectome of the Drosophilla Medulla studied here.

In contrast, in the model presented in the paper, we relax this restriction by allowing for a near arbitrary function with the potential of exponential decay. 

\section{A Generic Geometric Chung-Lu Model \label{sec:geommodel}} 

In this section, develop an generic geometric Chung-Lu Model. Unlike the geometric inhomogeneous random graphs above, our connection function will decay at an exponential rate. We call this a generic model because we relax two fundamental assumptions of previous models to accommodate real world structures. Namely, we do not require the nodes to either be distributed uniformly over a space or have any periodic properties.

We do this in two stages. In section \ref{sec:genericmodel}, we define a mathematically simple model where vertices are embedded into a $d$-dimensional torus under some distribution $\mu$. Most real-world graphs are not periodic in nature and have boundary considerations. Therefore, in section \ref{sec:specific}, we restrict our graph to one that is defined on a subset of the
torus $\rB \subset \R^d/ \Z^d$, with all vertices (and edged between vertices) outside $\rB$ removed, and all edges between vertices inside and outside of $\rB$ removed.
Unlike the traditional Chung-Lu model, { or even the geometric inhomogeneous random graphs model,} the boundary condition implies that the vector of weights $\vec{\rho}$ does not correspond directly with the expected degree of each node. Hence, the boundary conditions requires a careful re-weighting of the vertices to find the appropriate weights.

\subsection{A Geometric Chung-Lu Model without uniformity \label{sec:genericmodel}} 

The following are the inputs for the generic geometric Chung-Lu model on a $d$ dimensional torus. 
\begin{dfn}[Parameters of the Model] \label{dfn:genericinputs} ~\\

\begin{enumerate}
    \item \textbf{The dimension}, $d$.
    \item \textbf{The number of vertices}, $n$.
    \item \textbf{A probability measure.} A measure on $\mathbb{R}^d / \mathbb{Z}^d$, $\mu$, describing the distribution of nodes.
    \item \textbf{One parameter family of connection probabilities.} We define a function that gives a joint probability that two vertices are adjacent and within a certain distance from each other. We call this function \ba F_1(x) = P(i \sim j, d_{i,j} \le x)\label{eq:distandconnect} \ea.  We will place nontrivial constraints on $F_1$ that we describe later in Definition {\ref{dfn:genericassumptions}}.
    \item \textbf{Intensity vector.} A vector of intensities, $\vec{\rho}$, whose length is the number of nodes $n$.
\end{enumerate}
\end{dfn}

As with the Chung-Lu model, the vector $\vec{\rho}$ will, under mild conditions, be the expected degree of the nodes. However, when we introduce boundary considerations, this will no longer be the case. To emphasize this distinction, we refer to the vector as a vector of ``intensities'' as opposed to ``weights.''

To generate a graph under this model, we first place $n$ vertices independently on a $d$-dimensional torus, $\R^d$, according to the distribution $\mu$.

Once the $n$ vertices have been assigned a position on the torus, one randomly associates to each vertex an element of the intensity vector without replacement. (Throughout, we will mildly abuse notation and represent this by writing $P(\rho_i)$ which is $\frac{1}{n}$).

Given these inputs, we next construct the connection function for the model. In order to do so, we need to impose some additional assumptions and constraints. In particular, we need to ensure that the expected edge densities from the Chung-Lu and geometric aspects of the model agree. This, and other necessary assumptions and constraints are given below.

\begin{dfn}[Additional Constraints and Assumptions]
\label{dfn:genericassumptions} ~\\

\begin{enumerate}
    \item \textbf{Chung-Lu generalization} We impose that the probability that two nodes are connected given their intensity similar to the Chung-Lu model \ba P(i \sim j | \rho_i,\rho_j ) = \min\left\{ \frac{\rho_i \rho_j}{\sum_k \rho_k}, 1\right\} \label{eq:Chunglu}\;.\ea 
    \item \textbf{Compatible expected edge densities} The expected edge density of the graph from the geometric model is given by $\varep_{geom} = \lim_{x \to \infty} F_1(x)$. To get the expected edge density from the Chung-Lu generalization note that \bas \mathbb{E}(\deg_i) = \sum_{j} P(i\sim j| \rho_i, \rho_k) = \sum_j \min\{ \frac{\rho_i \rho_j}{\sum_k \rho_k}, 1  \} \;.\eas Therefore, the edge density of the graph is \bas \varep_{CL} =  \frac{\sum_k \mathbb{E}(\deg_k)}{n^2} \;.\eas 
    The expected edge densities computed by the geometric and by the Chung-Lu models must coincide: \bas \varep_{geom} = \varep_{CL} = \varep \;. \eas We denote by $\varep$ edge density of the model.
    \item \textbf{Independence} The distances between two vertices are independent of their intensities: \ba P(d_{i,j} < x, \rho_i, \rho_j)  = P(d_{i,j} < x) P (\rho_i,  \rho_j ). \label{eq:indep}\ea Furthermore, we require that the distances between two vertices and the expected degrees of the graph remain independent, given that they are adjacent: \ba P(d_{i,j} < x, \rho_i, \rho_j| i \sim j )  = P(d_{i,j} < x | i \sim j )P (\rho_i,  \rho_j | i \sim j ). \label{eq:condindep}\ea
\end{enumerate}
\end{dfn}

Note that the Chung-Lu generalization in Definition \ref{dfn:genericassumptions} is a generalization of the Chung-Lu condition in Section \eqref{eq:CLcondition}. This is the same generalization of this condition adopted by \cite{bringmann2019geometric}. Namely, we allow the intensities to be such that the value of $\frac{\rho_i \rho_j}{\sum_k \rho_k}$ may be greater than 1. When this occurs, we set the probability that the two nodes are connected to $1$. 

Under the conditions laid out in Definition \ref{dfn:genericassumptions} we may calculate the probability $P( d_{i,j} <x )$  for any two randomly chosen points $i,j$ given their positions  in $\mathbb{R}^d / \mathbb{Z}^d$. Call this function $F_2(x)$ given by \ba F_2(x) = P(d_{ij}< x) = \idotsint_{\|\mathbf{y}-\mathbf{z}\|_2 < x}  (\mu \times \mu)(\mathbf{y},\mathbf{z}) ~ dy_1 \ldots dy_d~ dz_1 \ldots dz_d \label{eq:dist}\ea 
where $(\mu \times \mu)$ is the product measure.
Note that the derivative of $F_2(x)$ can be interpreted as a probability density function, while the derivative of $F_1(x)$ cannot. Namely, the derivative $F_1'(x)$ gives the probability that two nodes are connected and distanced exactly $x$ apart: $F_1'(x) = P(i \sim j, d_{i,j} = x)$.


We are now ready to build the connection function for this model.

\begin{lem} \label{res:Pij|distance}
We may find the probability that two nodes are connected given that they are a specific distance apart as
\bas  P(i \sim j | d_{i,j} =x ) = \frac{F'_1(x)}{F'_2(x)} \;.\eas
\end{lem}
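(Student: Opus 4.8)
The plan is to unwind the two cumulative functions $F_1$ and $F_2$ and apply the elementary identity for conditional probabilities, in the limiting form appropriate to conditioning on a value of a continuous random variable. Since $\{d_{i,j} = x\}$ is a probability-zero event whenever $F_2$ is continuous, the quantity $P(i \sim j \mid d_{i,j} = x)$ must be read as the limit of the conditional probabilities $P(i \sim j \mid x < d_{i,j} \le x + \delta)$ as $\delta \to 0^+$; this is the standard convention, and it is exactly what makes the ratio $F_1'/F_2'$ the right object to write down.

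Concretely, I would argue as follows. Fix $x$ and $\delta > 0$. By the definition of $F_1$ in \eqref{eq:distandconnect} and of $F_2$ in \eqref{eq:dist},
\[
P\big(i \sim j,\ x < d_{i,j} \le x + \delta\big) = F_1(x+\delta) - F_1(x), \qquad P\big(x < d_{i,j} \le x + \delta\big) = F_2(x+\delta) - F_2(x),
\]
so that, dividing these positive-measure events,
\[
P\big(i \sim j \mid x < d_{i,j} \le x + \delta\big) = \frac{F_1(x+\delta) - F_1(x)}{F_2(x+\delta) - F_2(x)} = \frac{\big(F_1(x+\delta) - F_1(x)\big)/\delta}{\big(F_2(x+\delta) - F_2(x)\big)/\delta}.
\]
Letting $\delta \to 0^+$, the numerator converges to $F_1'(x)$ and the denominator to $F_2'(x)$; here $F_1'$ exists because it is part of the model's data (with $F_1'(x) = P(i \sim j,\ d_{i,j} = x)$, as noted in the text preceding the lemma), and $F_2'$ exists by differentiating the integral in \eqref{eq:dist} via the fundamental theorem of calculus. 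Defining $P(i \sim j \mid d_{i,j} = x)$ to be this limit yields the claimed formula $P(i \sim j \mid d_{i,j} = x) = F_1'(x)/F_2'(x)$.

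The only point that needs care is well-definedness of the limit: one must restrict attention to distances $x$ that occur with positive density, i.e.\ where $F_2'(x) \neq 0$, since otherwise the conditioning event is degenerate and the ratio is an indeterminate $0/0$. Apart from that caveat, the argument is just the fundamental theorem of calculus together with the definition of conditional probability, so I expect no genuine obstacle — the only substantive work is stating precisely the sense in which one conditions on the measure-zero event $\{d_{i,j} = x\}$.
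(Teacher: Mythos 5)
Your argument is essentially identical to the paper's: both interpret $P(i \sim j \mid d_{i,j} = x)$ as the limit of $P(i \sim j \mid x < d_{i,j} < x + h)$ as $h \to 0$, express that conditional probability as a ratio of increments of $F_1$ and $F_2$, and pass to the ratio of derivatives. Your added caveat about requiring $F_2'(x) \neq 0$ is a reasonable point of care but does not change the substance of the proof.
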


\begin{proof}
First note that by Bayes rule, \eqref{eq:distandconnect} and \eqref{eq:dist}, we may write the probability that two nodes are connected given that they are within a given distance as \ba P(i \sim j | d_{i,j} <x ) = \frac{F_1(x)}{F_2(x)} \;. \label{eq::f1overf2} \ea One may calculate the probability that two nodes are connected given that they are exactly a fixed difference apart by taking the derivatives of the numerator and denominator of the above display:
\ba 
\frac{ F_1'(x)}{ F_2'(x)} &= \lim_{h \to 0} \frac{P(i\sim j, x < d_{ij} < x+h )}{P(x < d_{ij} < x+h )} \\ &= \lim_{h \to 0} P(i \sim j | x < d_{i,j} <x + h  ) =  P(i \sim j | d_{i,j}  =  x )\label{eq:congivendistpdf} \ea as desired. 
\end{proof}

\begin{rmk}\label{rmk:F_1(0)behavior}Note that there is some delicacy in how we define the function $F_1(x)$ and its derivatives. Namely, we set it uniformly to $0$ for $x < 0$. However, if there are self loops, this function is no longer continuous. We have \bas F_1(x) = \begin{cases} 0 & x < 0  \\ 
\frac{\mathbb{E}(\# \; \textrm{self loops})}{n^2} & x=0 \\
P(i\sim j, d_{i,j} < x) & x >  0\end{cases}
\eas

When $x = 0$, we have \bas F_1(0) = \frac{P(i \sim i)}{n} = \frac{ \mathbb{E}(\#\; \textrm{self loops})}{n^2} \; .\eas  As a result, under mild conditions on the model, the function $F_1(x)$ is continuous on and differentiable on $(0 , \infty)$ with $\lim_{x \to \infty} F_1(x) = \varep.$ We define \bas F'_1(0) = \frac{\mathbb{E}(\# \textrm{self loops})}{n^2}  \delta(0)  \eas where $\delta(\cdot)$ is the Dirac-delta function.  Thus, $\int_0^\infty F'_1(x) ~{dx} = \varep$.
\end{rmk} 


We may now use Lemma \ref{res:Pij|distance} and the fact that the distance between the nodes is not only independent of their intensity, but that both are conditionally independent on whether the nodes are adjacent to define the connection function. 

\begin{thm} \label{res:edge|dist,valence}
The connection function for this model is \bas P(i\sim j | d_{i,j} = x, \rho_i, \rho_j) = \min\{\frac{\rho_i \rho_j}{\sum_k \rho_k}, 1\} \frac{F_1'(x)}{F'_2(x)} \frac{1}{\varep}\;, \eas  

\end{thm}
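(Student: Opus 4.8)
The plan is to apply Bayes' rule to $P(i\sim j \mid d_{i,j}=x, \rho_i,\rho_j)$ and then use the two independence hypotheses from Definition \ref{dfn:genericassumptions} to decouple the geometric factor from the Chung--Lu factor, at which point every remaining quantity has already been computed. Throughout, the conditioning on the measure-zero event $\{d_{i,j}=x\}$ is to be read, exactly as in the proof of Lemma \ref{res:Pij|distance}, as the $h\to 0$ limit of the corresponding statement on $\{x<d_{i,j}<x+h\}$, with the Dirac-delta convention of Remark \ref{rmk:F_1(0)behavior} covering the self-loop case $x=0$.

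First I would write, by Bayes' rule,
\[
P(i\sim j \mid d_{i,j}=x, \rho_i,\rho_j) = \frac{P(d_{i,j}=x, \rho_i,\rho_j \mid i\sim j)\, P(i\sim j)}{P(d_{i,j}=x, \rho_i,\rho_j)} .
\]
Applying the conditional independence \eqref{eq:condindep} to the numerator and the unconditional independence \eqref{eq:indep} to the denominator gives
\[
P(i\sim j \mid d_{i,j}=x, \rho_i,\rho_j) = \frac{P(d_{i,j}=x \mid i\sim j)\, P(\rho_i,\rho_j \mid i\sim j)\, P(i\sim j)}{P(d_{i,j}=x)\, P(\rho_i,\rho_j)} .
\]
Now I would regroup the right-hand side into two pieces. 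Pairing $P(\rho_i,\rho_j\mid i\sim j)\,P(i\sim j)/P(\rho_i,\rho_j)$ and recognizing it, via Bayes' rule, as $P(i\sim j\mid \rho_i,\rho_j)$ yields the factor $\min\{\rho_i\rho_j/\sum_k\rho_k,\,1\}$ by the Chung--Lu generalization \eqref{eq:Chunglu}. The remaining piece, $P(d_{i,j}=x\mid i\sim j)/P(d_{i,j}=x)$, is rewritten by Bayes' rule once more as $P(i\sim j\mid d_{i,j}=x)/P(i\sim j)$; Lemma \ref{res:Pij|distance} identifies the numerator as $F_1'(x)/F_2'(x)$, and $P(i\sim j)=\lim_{x\to\infty}F_1(x)=\varep$ since all pairwise distances on the bounded torus are finite (equivalently $\int_0^\infty F_1'(x)\,dx=\varep$, as noted in Remark \ref{rmk:F_1(0)behavior}). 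Multiplying the two factors produces $\min\{\rho_i\rho_j/\sum_k\rho_k,\,1\}\cdot\frac{F_1'(x)}{F_2'(x)}\cdot\frac{1}{\varep}$, which is the claimed connection function.

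The only genuine subtlety — and the step I would be most careful with — is this measure-zero conditioning: each manipulation of $P(\cdot\mid d_{i,j}=x)$ must be justified as a limit of the same manipulation on the shrinking shell $\{x<d_{i,j}<x+h\}$, precisely as in \eqref{eq:congivendistpdf}, and the delta-function convention of Remark \ref{rmk:F_1(0)behavior} is what keeps the $x=0$ self-loop case consistent with the formula. Once that bookkeeping is set up, the argument is simply a triple application of Bayes' rule combined with the independence assumptions \eqref{eq:indep} and \eqref{eq:condindep}.
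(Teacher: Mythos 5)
Your proposal is correct and follows essentially the same route as the paper's own proof: a single application of Bayes' rule to $P(i\sim j \mid d_{i,j}=x,\rho_i,\rho_j)$, factorization of the joint conditional via \eqref{eq:condindep} and \eqref{eq:indep}, and identification of the resulting pieces with \eqref{eq:Chunglu}, Lemma \ref{res:Pij|distance}, and $P(i\sim j)=\varep$. Your explicit remark on interpreting the conditioning on $\{d_{i,j}=x\}$ as an $h\to 0$ limit is a welcome bit of extra care that the paper leaves implicit.
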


\begin{proof}

By Bayes rule, and assumption \eqref{eq:condindep}, we may write \bas P(i\sim j | d_{i,j} = x, \rho_i, \rho_j) P(d_{i,j} = x, \rho_i, \rho_j )  &= P(d_{i,j} = x, \rho_i, \rho_j| i\sim j ) P(i \sim j) \\ &=  P(d_{i,j} = x| i\sim j ), P(\rho_i, \rho_j| i\sim j ) P(i \sim j) \\ & = \frac{P( i\sim j |d_{i,j} = x ) P( i\sim j | \rho_i, \rho_j) P(d_{i,j} = x) P( \rho_i, \rho_j )}{P( i\sim j )} \;, \eas or by \eqref{eq:indep} \bas P(i\sim j | d_{i,j} = x, \rho_i, \rho_j)  = \frac{P( i\sim j |d_{i,j} = x ) P( i\sim j | \rho_i, \rho_j)}{P( i\sim j )}\;.\eas Equations \eqref{eq:Chunglu} and \eqref{eq:congivendistpdf} give expressions for $P( i\sim j |d_{i,j} = x )$ and $P( i\sim j | \rho_i, \rho_j)$ respectively. By construction $P(i\sim j) = \varep$. Combining these, we have \bas P(i\sim j | d_{i,j} = x, \rho_i, \rho_j) = \min\{\frac{\rho_i \rho_j}{\sum_k \rho_k}, 1\} \frac{F_1'(x)}{F'_2(x)} \frac{1 }{\varep}, \eas as desired.

\end{proof}


Note that if we remove the Chung-Lu generalization, i.e. demand that $\frac{\rho_i \rho_j}{\sum_k \rho_k} \le 1$, we see the the intensities are exactly the expected degree of a vertex, given the associated intensity.

\begin{lem}\label{res:expdegrho}
Restricting to the case where the Chung-Lu condition holds, i.e. to when $\frac{\rho_i \rho_j}{\sum_k \rho_k} \le 1$ for all $i, j$, we have that for any vertex $i$, the expected degree of the vertex is equal to its given intensity:

\[ \mathbb{E}(deg_i | \rho_i ) =  \rho_i \]
\end{lem}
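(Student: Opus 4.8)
The plan is to compute $\mathbb{E}(\deg_i | \rho_i)$ directly from the definition of degree as a sum of adjacency indicators, reduce each edge probability to the Chung--Lu generalization \eqref{eq:Chunglu}, and then exploit an exact cancellation between the self-loop term and the deficit created by assigning intensities without replacement.

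First, since a self-loop contributes $1$ to the degree, I would write
\[
\mathbb{E}(\deg_i | \rho_i) \;=\; P(i \sim i | \rho_i) \;+\; \sum_{j \neq i} P(i \sim j | \rho_i),
\]
the sum ranging over the other $n-1$ vertices. The distance $d_{i,j}$ never enters this quantity, so the hypothesis that the Chung--Lu condition holds together with \eqref{eq:Chunglu} lets us use $P(i \sim j | \rho_i, \rho_j) = \frac{\rho_i \rho_j}{\sum_k \rho_k}$ directly; one can check this is consistent with Theorem \ref{res:edge|dist,valence} by integrating the connection function against the density $F_2'(x)$ of $d_{i,j}$ and using $\int_0^\infty F_1'(x)\,dx = \varep$ from Remark \ref{rmk:F_1(0)behavior}.

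Next I would average over the intensity assigned to $j$ via the law of total probability. For the self-loop the relevant intensity is $\rho_i$ itself, giving $P(i \sim i | \rho_i) = \frac{\rho_i^2}{\sum_k \rho_k}$. For $j \neq i$, because intensities are assigned without replacement, conditioning on $\rho_i$ being placed at $i$ makes $\rho_j$ uniform on the remaining multiset, so $\mathbb{E}[\rho_j | \rho_i] = \frac{\left(\sum_k \rho_k\right) - \rho_i}{n-1}$ and hence $P(i \sim j | \rho_i) = \frac{\rho_i}{\sum_k \rho_k}\cdot\frac{\left(\sum_k \rho_k\right) - \rho_i}{n-1}$. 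Summing the $n-1$ identical off-diagonal terms and adding the self-loop term,
\[
\mathbb{E}(\deg_i | \rho_i) \;=\; \frac{\rho_i^2}{\sum_k \rho_k} \;+\; \frac{\rho_i\left(\left(\sum_k \rho_k\right) - \rho_i\right)}{\sum_k \rho_k} \;=\; \rho_i.
\]

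The step I expect to be the main obstacle --- or at least the one requiring the most care in the write-up --- is precisely this without-replacement bookkeeping: one must notice that the mass $\rho_i$ ``missing'' from the off-diagonal sum is exactly restored by the diagonal self-loop term, so the identity is exact rather than merely asymptotic. If one prefers, the mild abuse $P(\rho_j) = \tfrac{1}{n}$ used elsewhere in the paper (treating the assignment as with replacement) gives a one-line computation: $\mathbb{E}[\rho_j] = \tfrac{1}{n}\sum_k \rho_k$, and summing $n$ identical terms over all $j$ yields $\rho_i$; it is worth remarking that the two conventions give the same answer here.
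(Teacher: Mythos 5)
Your proof is correct, and it differs from the paper's in two respects worth noting. The paper's proof keeps the distance in play throughout: it expands $P(i\sim j\mid\rho_i)$ as a double sum over the candidate intensities $\rho_\ell$ and an integral over $d_{ij}$, substitutes the full connection function of Theorem \ref{res:edge|dist,valence}, and relies on the cancellation $\int_0^\infty F_2'(x)\,\frac{F_1'(x)}{F_2'(x)}\,\frac{1}{\varep}\,dx = 1$ coming from Remark \ref{rmk:F_1(0)behavior} --- in effect re-deriving the marginal \eqref{eq:Chunglu} rather than citing it, which doubles as a consistency check that the constructed connection function integrates back to the Chung--Lu probability. You invoke \eqref{eq:Chunglu} directly and relegate that integration to a parenthetical; this is legitimate, since it is an assumption of the model, and it shortens the argument. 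The second difference is the more substantive one: the paper handles the intensity of $j$ via the declared abuse $P(\rho_j=\rho_\ell)=\frac{1}{n}$ (i.e.\ as if the assignment were with replacement) and concludes with $\sum_j \rho_i\rho_j/\sum_k\rho_k = \rho_i$, whereas you do the exact without-replacement bookkeeping, $\mathbb{E}[\rho_j\mid\rho_i] = \bigl(\sum_k\rho_k - \rho_i\bigr)/(n-1)$ for $j\neq i$, and observe that the mass $\rho_i$ missing from the off-diagonal sum is restored exactly by the self-loop term. That cancellation is not in the paper and is a genuine gain in rigor; your closing remark that the two conventions give the same answer explains precisely why the paper's shortcut is harmless here. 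I see no gaps.
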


\begin{proof}
Note that by definition 
\bas
\mathbb{E}(deg_i \;|\; \rho_i ) = \sum_j {P}(i \sim j | \rho_i ) \eas which, by the law of total probability can be rewritten   
\bas \mathbb{E}(deg_i \;|\; \rho_i )  = \sum_j  \sum_\ell {P}(\rho_j = \rho_\ell) {P}(i\sim j \;|\; \rho_i, \rho_\ell ) \; .\eas Similarly, we may write  $ {P}(\rho_j = \rho_\ell) {P}(i\sim j \;|\; \rho_i, \rho_\ell ) = \int_0^\infty  F_2'(x) {P}(\rho_j = \rho_\ell\;|\; d_{ij} = x) {P}(i\sim j \;|\; \rho_i, \rho_\ell , d_{ij} = x ) ~{dx}$. Therefore, we may write \bas \mathbb{E}(deg_i \;|\; \rho_i ) = \sum_j  \sum_\ell \int_0^\infty  F_2'(x) {P}(\rho_j  = \rho_\ell\;|\; d_{ij} = x) {P}(i\sim j \;|\; \rho_i, \rho_\ell, d_{ij} = x ) ~{dx} \;.\eas
Note that since the $\rho_j$ are distributed randomly amongst the vertices, ${P}(\rho_j = \rho_\ell\;|\; d_{ij} = x) = {P}(\rho_j = \rho_\ell) = \frac{1}{n}$. Combined with  Theorem \ref{res:edge|dist,valence} we rewrite this \bas \mathbb{E}(deg_i \;|\; \rho_i ) = \sum_j  \sum_\ell \int_0^\infty F_2'(x) \frac{1}{n}  \min\{\frac{\rho_i \rho_j}{\sum_k \rho_k}, 1\} \frac{F_1'(x)}{F'_2(x)} \frac{1}{\varep} \;.\eas Since there is not $\ell$ in this statement anymore, by the assumption that $\frac{\rho_i \rho_j}{\sum_k \rho_k} \le 1$,  this simplifies to \bas \mathbb{E}(deg_i \;|\; \rho_i ) = \frac{1}{\varepsilon} \sum_j  \frac{\rho_i \rho_j}{\sum_k \rho_k}  \int_0^\infty  F_1'(x) ~{dx} \;. \eas By Remark \ref{rmk:F_1(0)behavior}, we see that $\int_0^\infty F'_1(x) ~{dx} = \varep$. In other words, \bas \mathbb{E}(deg_i \;|\; \rho_i ) = \sum_j  \frac{\rho_i \rho_j}{\sum_k \rho_k}  = \rho_i \eas as desired.


\end{proof}

\subsection{The generic geometric Chung-Lu model: without uniformity and with boundaries \label{sec:specific}}

While the generic model in Section \ref{sec:genericmodel} is mathematically convenient, the toroidal nature is not present in real-world networks. To address this, we modify the original model by additionally prescribing an addition parameters.

\begin{dfn} \label{dfn:specificinputs}
In addition to the parameters given in Definition \ref{dfn:genericinputs}, we add an additional input, a region within the torus:
\begin{enumerate} \setcounter{enumi}{5} 
    \item A region $\rB \subset \mathbb{R}^d / \mathbb{Z}^d$.
\end{enumerate}
\end{dfn}

This restriction to a region induces several changes to the parameters given in Definition \ref{dfn:genericinputs}.

First, we need to change the graph itself, as well as the measure on this space. To generate a random graph, we select $n$ points within $\rB$ using the measure $\frac{1}{\mu(\rB)} \mu$ and generate edges according to the probabilities Theorem \ref{res:edge|dist,valence}.  We then, consider the resulting induced subgraph of the vertices in $\rB$. We call the resulting network $G_{\rB}$.

Second, imposing a region $\rB$ changes the previously defined probabilities $F_1(x)$ and $F_2(x)$. 

\begin{dfn}
Let $F_{\rB,1}(x) = P(i \sim j, d_{i,j}\leq x | i,j \in \rB)$ and $F_{\rB,2}(x) d_{i,j}\leq x | i,j \in \rB)$.
\end{dfn}


As in Lemma \ref{res:Pij|distance}, we write \bas \frac{F_{\rB,1}(x)}{F_{\rB,2}(x)} = P(i \sim j | d_{ij} < x \; ;  i,j \in \rB)\;. \eas Then we may relate this conditional probability to the conditional probability in the generic model, $P(i \sim j | d_{ij} < x)$.

To understand the bias between $F_1(x)$ and $F_{\rB,1}(x)$ and $F_2(x)$ and $F_{\rB,2}(x)$, we define two functions, $G_1(x) = P(i,j \in \rB \;|\; i\sim j, d_{ij}<x)$, and  $G_2(x) = P(i,j \in \rB \;|\; d_{ij}<x)$

\begin{thm} 
\[ \frac{F_{\rB,1}'(x)}{F_{\rB,1}'(x)} = \frac{G_1'(x)}{G_2'(x)}  \frac{ F_1'(x)}{ F_2'(x)}. \]
\end{thm}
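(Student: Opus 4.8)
The plan is to derive the identity from a single application of Bayes' rule, carried out inside the probability space conditioned on the event $\{d_{i,j}=x\}$. The three quotients of primed quantities in the statement are, once correctly interpreted, precisely the three conditional probabilities that Bayes' rule ties together, so once each of them is identified the identity drops out. The only genuine work is bookkeeping: reading ``$G_1'/G_2'$'' as a ratio of conditional densities at distance exactly $x$, in the same spirit as the proof of Lemma \ref{res:Pij|distance}. It is worth noting in passing that, unlike the earlier results of this section, the statement uses none of the independence hypotheses of Definition \ref{dfn:genericassumptions}; it is a pure Bayes computation in which the intensities $\vec\rho$ have already been marginalized out.

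Concretely, I would proceed as follows. First, set $p_{\rB}=\mu(\rB)^2=P(i,j\in\rB)$, a positive constant. From the definitions, $F_{\rB,1}(x)=p_{\rB}^{-1}\,P(i\sim j,\ i,j\in\rB,\ d_{i,j}\le x)$ and $F_{\rB,2}(x)=p_{\rB}^{-1}\,P(i,j\in\rB,\ d_{i,j}\le x)$; differentiating numerator and denominator and taking the $h\to 0$ limit exactly as in \eqref{eq:congivendistpdf}, the constant $p_{\rB}$ cancels and
\[ \frac{F_{\rB,1}'(x)}{F_{\rB,2}'(x)} = P\bigl(i\sim j\mid d_{i,j}=x,\ i,j\in\rB\bigr). \]
Second, Lemma \ref{res:Pij|distance} gives $F_1'(x)/F_2'(x)=P(i\sim j\mid d_{i,j}=x)$. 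Third, writing $G_1(x)=P(i\sim j,\ i,j\in\rB,\ d_{i,j}<x)/F_1(x)$ and $G_2(x)=P(i,j\in\rB,\ d_{i,j}<x)/F_2(x)$, and reading the primes as ``derivative of the numerator over derivative of the denominator'' (the operation legitimized by \eqref{eq:congivendistpdf}), one obtains
\[ \frac{G_1'(x)}{G_2'(x)} = \frac{P(i,j\in\rB\mid i\sim j,\ d_{i,j}=x)}{P(i,j\in\rB\mid d_{i,j}=x)}. \]
Finally, applying Bayes' rule with background measure $P(\,\cdot\mid d_{i,j}=x)$ to the events $\{i\sim j\}$ and $\{i,j\in\rB\}$,
\[ P(i\sim j\mid i,j\in\rB,\ d_{i,j}=x) = \frac{P(i,j\in\rB\mid i\sim j,\ d_{i,j}=x)\,P(i\sim j\mid d_{i,j}=x)}{P(i,j\in\rB\mid d_{i,j}=x)}, \]
and substituting the three displays above gives the claimed factorization.

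I expect the main obstacle to be formal rather than conceptual: one must resist applying the quotient rule literally to $G_1$ and $G_2$, since $\tfrac{d}{dx}(N_1/F_1)\ne N_1'/F_1'$ and the naive expansion does not factor. The clean way to sidestep this is to keep everything in terms of the joint cumulative quantities $N_1(x)=P(i\sim j,\ i,j\in\rB,\ d_{i,j}\le x)$ and $N_2(x)=P(i,j\in\rB,\ d_{i,j}\le x)$: then $F_{\rB,1}'/F_{\rB,2}'=N_1'/N_2'$, while $\tfrac{G_1'}{G_2'}\cdot\tfrac{F_1'}{F_2'}$ telescopes to the same $N_1'/N_2'$, so the identity is just an equality of two expressions for one quantity. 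Two standing caveats should be recorded, both handled exactly as in Lemma \ref{res:Pij|distance} and Remark \ref{rmk:F_1(0)behavior}: conditioning on the measure-zero event $\{d_{i,j}=x\}$ is shorthand for the $h\to 0$ limit of conditioning on $\{x<d_{i,j}<x+h\}$, and if self-loops are present the identity is asserted on $(0,\infty)$, away from the atom that $F_1$ and its $\rB$-analogues carry at $x=0$.
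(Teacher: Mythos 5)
Your proposal is correct and follows essentially the same route as the paper's proof: identify $F_{\rB,1}'/F_{\rB,2}'$ as $P(i\sim j\mid d_{i,j}=x,\ i,j\in\rB)$, identify $G_1'/G_2'$ as the ratio $P(i,j\in\rB\mid i\sim j, d_{i,j}=x)/P(i,j\in\rB\mid d_{i,j}=x)$, and conclude by Bayes' rule conditioned on $\{d_{i,j}=x\}$. Your additional care in reading the primes on $G_1,G_2$ as ``derivative of numerator over derivative of denominator'' of the underlying joint cumulative quantities (rather than literal derivatives of the conditional probabilities, for which the identity would fail) makes explicit a point the paper's proof leaves implicit, but it is the same argument.
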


\begin{proof}
This proof follows from the arguments presented in Lemma \ref{res:Pij|distance}. Namely 
\ba
\frac{F_{\rB,1}'(x)}{F_{\rB,2}'(x)} &= P(i \sim j | d_{ij} = x \; ;  i,j \in \rB) \label{eq:F1/F2estimate}\ea while by a similar calculation, \bas \frac{G_1'(x)}{G_2'(x)} &= \frac{P(i,j \in \rB \;|\; i\sim j,  d_{ij}=x)}{P(i,j \in \rB \;|  d_{ij}=x)} \;.\eas The result follows from Bayes rule.


\end{proof}

The ratio $G_1'(x)/G_2'(x)$ measures bias introduced to the conditional probability $\frac{F'_1(x)}{F'_2(x)} = P(i\sim j | d_{i,j} = x)$ caused by the shape of $\rB$. 
If the ratio $\frac{G_1'(x)}{G_2'(x)}$ is approximately 1 for all $x$, that is, if the probability that two nodes are connected and the probability that they are both in $\rB$ are conditionally independent on their distance, then   
\[ \frac{F_{\rB,1}'(x)}{F_{\rB,2}'(x)} \approx  \frac{ F_1'(x)}{ F_2'(x)}. \] 


Finally, by restricting the region, one must also change the intensities of the vertices accordingly.

\begin{dfn} \label{dfn:rhorestricted}
Let $\vec \rho_{\rB}$ be the vector of intensities on the graph $G_\rB$. Namely, as a generalization of Lemma \ref{res:expdegrho}, $\vec \rho_{\rB} = \mathbb{E}(deg_i^\rB | \rho_i,  d_{i,j})$.
\end{dfn}

We may relate this quantity to the original vector of intensities. However, in order to do so, we first need to define an expected valency of each vertex given its position in the graph, using Lemma \ref{res:Pij|distance}.

\begin{dfn} 
As above, write $d_{i,k}$ to indicate the distance between the vertex $i$ and $k$. Let $\omega_i = \sum_j P(i \sim j | d_{i,j}, i, j \in \rB)$ be the geometric weight of vertex $i$, or the expected degree of the vertex, given its position relative to all other vertices. 
\end{dfn}

Note that $\omega_i$ is a real number associated to each vertex, and not a function of $x$.

\begin{lem} \label{res:geomweight}
Let $d_{i,j}$ be the distance between vertex $i$ and vertex $j$, while $\deg_j$ is the degree of the vertex $j$. Then the geometric weight of a vertex given its position can be expressed \bas \omega_i= \sum_j \frac{F_{\rB,1}'(d_{i,j})}{F_{\rB,2}'(d_{i,j})}\;.\eas
\end{lem}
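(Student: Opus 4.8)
The plan is to unwind the definition of $\omega_i$ and substitute the conditional edge probability established just above. By definition, $\omega_i = \sum_j P(i \sim j \mid d_{i,j}, i,j \in \rB)$, where the sum ranges over all vertices $j$, including the self-loop term $j = i$. By the exchangeability of the construction (positions drawn i.i.d.\ from $\tfrac{1}{\mu(\rB)}\mu$ and intensities assigned uniformly at random), the quantity $P(i \sim j \mid d_{i,j} = x, i, j \in \rB)$ depends on the pair $(i,j)$ only through the realized distance $x$; moreover, since edges are generated independently, conditioning additionally on the positions of the remaining vertices adds no information about the event $i \sim j$. Hence each summand in $\omega_i$ equals $P(i \sim j \mid d_{i,j} = x, i, j \in \rB)$ evaluated at $x = d_{i,j}$.

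The key step is then to invoke equation \eqref{eq:F1/F2estimate} (obtained exactly as in Lemma \ref{res:Pij|distance} by differentiating numerator and denominator of $F_{\rB,1}(x)/F_{\rB,2}(x)$), which identifies this conditional probability with $\tfrac{F_{\rB,1}'(x)}{F_{\rB,2}'(x)}$. Substituting $x = d_{i,j}$ into each term and summing over $j$ gives
\[ \omega_i = \sum_j P(i \sim j \mid d_{i,j}, i,j \in \rB) = \sum_j \frac{F_{\rB,1}'(d_{i,j})}{F_{\rB,2}'(d_{i,j})}, \]
which is the claimed identity. Note this is an identity between functions of the realized configuration: it holds termwise for every fixed placement of the vertices.

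The main obstacle is the self-loop term $j = i$, where $d_{i,i} = 0$ and, as flagged in Remark \ref{rmk:F_1(0)behavior}, $F_{\rB,1}$ is discontinuous at $0$, so its derivative there must be read in the Dirac-delta sense. I would handle this with the convention of that remark: $F_{\rB,1}'(0)$ carries the atom $\tfrac{\mathbb{E}(\#\ \textrm{self loops})}{n^2}\,\delta(0)$ and $F_{\rB,2}'(0)$ carries the matching atom for the distance-$0$ event, so the ratio at $x=0$ returns precisely $P(i \sim i \mid i \in \rB)$; this makes the $j=i$ summand well-defined and keeps the stated formula valid without a separate case. A secondary, purely technical point is to note that $F_{\rB,2}'(d_{i,j}) \ne 0$ at the realized positive distances, so the ratios are meaningful — this holds under the same mild regularity on $\mu|_{\rB}$ implicit in Lemma \ref{res:Pij|distance}.
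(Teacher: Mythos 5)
Your proposal is correct and follows essentially the same route as the paper: substitute the identity $\frac{F_{\rB,1}'(x)}{F_{\rB,2}'(x)} = P(i \sim j \mid d_{i,j} = x;\; i,j \in \rB)$ from equation \eqref{eq:F1/F2estimate} into the definition of $\omega_i$ and evaluate at the realized distances. Your additional attention to the $j=i$ self-loop term via the Dirac-delta convention of Remark \ref{rmk:F_1(0)behavior}, and to the non-vanishing of $F_{\rB,2}'$ at realized distances, is a welcome refinement the paper leaves implicit.
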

\begin{proof} 

Summing both sides of equation \eqref{eq:F1/F2estimate}, we see that \bas \sum_i P(i \sim j | d_{i,j} =x; \; i, j \in \rB ) = \sum_i \frac{F_{\rB,1}'(x)}{F_{\rB,2}'(x)} \;.\eas Evaluating this at the distances between the nodes as they have been distributed in $\R^d / \Z^d$, we may rewrite this as \bas \omega_i = \sum_j P(i \sim j | d_{i,j} ; \; i, j \in \rB) = \sum_j \frac{F_{\rB,1}'(d_{i,j})}{F_{\rB,2}'(d_{i,j})} \eas 
\end{proof}

We can use this to calculate $\vec \rho_{\rB}$, the intensity vector in the bounded region.


\begin{thm} \label{res:rhohat}
The expected degree of a vertex $i$ on the induced subgraph in the region $\rB$, given $\rho_i$ and its distance from all points, $d_{i,j}$, is 
\[ \rho_{\rB, i} = \rho_i \frac{\omega_i}{n \varepsilon}  \]
\end{thm}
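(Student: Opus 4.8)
The plan is to unwind Definition~\ref{dfn:rhorestricted} directly. Writing $\rho_{\rB,i} = \mathbb{E}(\deg_i^\rB \mid \rho_i, d_{i,j})$ and expanding the expected degree of $i$ in $G_\rB$ as $\deg_i^\rB = \sum_j \mathbb{1}[i\sim j]$ with $j$ ranging over the vertices in $\rB$, linearity of expectation gives
\[
\rho_{\rB,i} \;=\; \sum_j P\big(i\sim j \,\big|\, \rho_i,\, d_{i,j};\ i,j\in\rB\big),
\]
so the whole proof reduces to evaluating one summand $P(i\sim j\mid \rho_i, d_{i,j}=x;\ i,j\in\rB)$ and re-summing at the realised distances.

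The first move is to integrate out the intensity of the far endpoint, exactly as in the proof of Lemma~\ref{res:expdegrho}: the $\rho_j$ are uniform among the $n$ intensities and, by~\eqref{eq:indep}, independent of the distance (and of the purely geometric event $i,j\in\rB$), so the law of total probability gives $P(i\sim j\mid \rho_i, d_{i,j}=x;\ i,j\in\rB) = \tfrac1n\sum_\ell P(i\sim j\mid \rho_i, \rho_j=\rho_\ell, d_{i,j}=x;\ i,j\in\rB)$. For this last probability I would establish the boundary analogue of Theorem~\ref{res:edge|dist,valence},
\[
P\big(i\sim j \,\big|\, \rho_i, \rho_\ell, d_{i,j}=x;\ i,j\in\rB\big)
\;=\; \min\Big\{\tfrac{\rho_i\rho_\ell}{\sum_k\rho_k},\,1\Big\}\,\frac{F_{\rB,1}'(x)}{F_{\rB,2}'(x)}\,\frac1\varepsilon ,
\]
by rerunning the Bayes-rule computation of that proof with every probability additionally conditioned on $i,j\in\rB$: the conditional independence~\eqref{eq:condindep} persists under this extra conditioning (the Chung--Lu factor still depends only on the intensities, the geometric factor only on the distance), and the geometric factor $P(i\sim j\mid d_{i,j}=x;\ i,j\in\rB)$ is $F_{\rB,1}'(x)/F_{\rB,2}'(x)$ by~\eqref{eq:F1/F2estimate}. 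I expect the delicate step --- the main obstacle --- to be checking that the normaliser is the original model edge density $\varepsilon$ rather than the edge density of $G_\rB$: conditioning on $i,j\in\rB$ rescales both the bare Chung--Lu probability $P(i\sim j\mid\rho_i,\rho_\ell;\ i,j\in\rB)$ and the conditional edge density $P(i\sim j\mid i,j\in\rB)$ by the same factor, and one has to verify that these cancel in the Bayes ratio so that $\varepsilon$ survives. As in Lemma~\ref{res:expdegrho}, this is also where the Chung--Lu condition is used: it lets us replace $\min\{\cdot,1\}$ by its argument and treat the relevant probabilities as products, and the general statement will have to carry that caveat.

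Granting the summand, the rest is arithmetic. Under the Chung--Lu condition, $\tfrac1n\sum_\ell \tfrac{\rho_i\rho_\ell}{\sum_k\rho_k} = \tfrac{\rho_i}{n}\cdot\tfrac{\sum_\ell\rho_\ell}{\sum_k\rho_k} = \tfrac{\rho_i}{n}$, so
\[
\rho_{\rB,i} \;=\; \sum_j \frac{\rho_i}{n}\,\frac{F_{\rB,1}'(d_{i,j})}{F_{\rB,2}'(d_{i,j})}\,\frac1\varepsilon
\;=\; \frac{\rho_i}{n\varepsilon}\sum_j \frac{F_{\rB,1}'(d_{i,j})}{F_{\rB,2}'(d_{i,j})}
\;=\; \frac{\rho_i}{n\varepsilon}\,\omega_i ,
\]
where the last equality is Lemma~\ref{res:geomweight}; this is the claimed identity $\rho_{\rB,i} = \rho_i\,\omega_i/(n\varepsilon)$. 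The one loose end is the index $j$ with $d_{i,j}=0$, i.e.\ the possible self-loop at $i$: as in the unbounded model this term is absorbed by the Dirac-delta convention of Remark~\ref{rmk:F_1(0)behavior} and contributes the expected self-loop probability, nothing new.
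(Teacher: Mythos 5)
Your proposal is correct and follows essentially the same route as the paper's proof: expand $\mathbb{E}(\deg_i^{\rB}\mid\rho_i,d_{i,j})$ as a sum over $j$, marginalize over the uniformly assigned $\rho_\ell$ with weight $\frac{1}{n}$, apply the connection function of Theorem \ref{res:edge|dist,valence} with $F_{\rB,1},F_{\rB,2}$ in place of $F_1,F_2$, and conclude via Lemma \ref{res:geomweight}. If anything, you are more careful than the paper, which simply asserts the substitution of the restricted functions into Theorem \ref{res:edge|dist,valence} without the justification (or the caveat about the normalizer $\varepsilon$) that you supply.
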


\begin{proof}
If its intensity is known, as well as the position of the other nodes in the graph. Therefore, we may write 
\begin{eqnarray} 
\mathbb{E}(deg_i^\rB | \rho_i, d_{i,j}) &=&  \sum_{j \in \rB} P(i \sim j | \rho_i, d_{i,j}) \\  & = &\sum_{j \in \rB} \sum_\ell P(\rho_\ell |\rho_i, d_{i,j} ) P(i \sim j | \rho_i, \rho_j=\rho_\ell, d_{i,j}) \end{eqnarray} where the second equality comes from the marginal law of probabilities.

By construction, $P(\rho_\ell |\rho_i, d_{i,j} ) = \frac{1}{n}$. Therefore, \bas 
\mathbb{E}(deg_i^\rB | \rho_i, d_{i,j}) = \frac{1}{n}\sum_{j \in \rB} \sum_\ell  P(i \sim j | \rho_i, \rho_j=\rho_\ell, d_{i,j})
\eas 

By substituting the restricted functions $F_{\rB,1}(x)$ and $F_{\rB,1}(x)$ into Theorem \ref{res:edge|dist,valence} and evaluating at the distances between nodes, the right hand side becomes
\bas 
\mathbb{E}(deg_i^\rB | \rho_i, d_{i,j}) = \frac{1}{n} \sum_{j \in \rB} \frac{F_{\rB,1}'(d_{ij})}{\hat F_2'(d_{ij})} \frac{1}{\varepsilon} \cdot  \min\{\sum_\ell \frac{\rho_i \rho_\ell }{\sum_k \rho_k}, 1\} \eas Note that $\sum_\ell \frac{\rho_i \rho_\ell }{\sum_k \rho_k} = \rho_i$. Therefore, by Lemma \ref{res:geomweight}, we get 
\bas \mathbb{E}(deg_i^\rB | \rho_i, d_{i,j}) = \omega_i \frac{1}{n \varepsilon} \cdot \rho_i \;.\eas Definition \ref{dfn:rhorestricted} gives the desired statement.
\end{proof}

\subsection{Estimating the parameters given a network\label{sec:estimatedmodel}}

In many cases of real world data, one does not actually have the parameters needed to fit the model described above. Rather, they must be estimated from the data observed in a few given networks. 

In this section, given a fixed network $G$, we estimate the parameters given in Definitions \ref{dfn:genericinputs} and \ref{dfn:specificinputs}. In Section \ref{sec:connectomesimulation} we used these estimated values to synthesize networks similar to $G$. We call $G$ the {\it reference graph}.

\begin{dfn}[Inputs from the data]
\begin{enumerate}
    \item The reference graph $G$ with a list of vertices and edges $G= (V, E)$ where $E$ may include self-loops. (Note that this input includes the degree of each vertex, ${deg}_i^G$.)
    \item A set of coordinates $\mathcal{P}$ for the vertices in the reference graph (from which all $d_{i,j}$ can be calculated).
\end{enumerate}
\end{dfn}

\begin{dfn}[Estimated quantities] ~\\
\begin{enumerate} 
\item A function $\hat{F}_1(x)$ fit from the location and adjacency data in the reference graph: $F_{\rB, 1}(x) = P(i\sim j \;, d_{i,j}<x| i,j \in\rB)$.

\item A function $\hat F_2(x)$ fit from the empirical cumulative probability function of the geometric distribution of the vertices of the reference graph: $F_2(x) = P( d_{i,j}<x| i,j \in\rB)$. 

\item A vector of intensities for each vertex, $\hat \rho_i$ to be calculated according to Theorem \ref{res:rhohat}.
\end{enumerate}
\end{dfn}

To calculate $\hat F_1(x)$, $\hat F_2(x)$, we simply fit the corresponding equally-spaced quantiles based upon the inputs of $G$ and $\mathcal{P}$. To calculate $\hat \rho$, we may write $\deg_i^G$ as a point estimate for $\mathbb{E}(deg_i^\rB | \rho_i, d_{i,j})$. By rearranging Theorem \ref{res:rhohat} may take the estimate the vector of intensities, $\rho$, as 
\ba \hat\rho_i =  \frac{deg_i^G \cdot n\varepsilon}{w_i}. \label{eq:rhohat} \ea
In other words, the estimator for $\rho_i$ is proportional to the ratio between the degree of the references graph and the degree one would expect based on the geometries of the nodes. 

There are several remarks to note about these estimates.

First, $\rho_i$ (and $\hat \rho_i$) represent the expected degree in the full unbounded model; hence, one should expect that $\hat \rho_i$ is larger than the corresponding degree in the reference graph, $\deg^G_i$.

Second, note that the numerator of $\hat F_1$ is $\varepsilon$. From Definition \ref{dfn:genericinputs}, one expects the limit \bas \lim_{x \to \infty} F_1(x) = \lim_{x \to \infty} P(i\sim j , d_{i,j}<x| i, j \in \mathcal{B}) = \varepsilon\eas to be the edge density of the graph. Similarly, 
the numerator of $\hat F_2$ is $1$. From Definition \ref{dfn:genericinputs}, one expects the limit \bas \lim_{x \to \infty} F_1(x) = \lim_{x \to \infty} P(d_{i,j}<x| i, j \in \mathcal{B}) =  1\eas to be 1, as the volume of $\mathcal{B}$ is finite.


Further, one may notice that we omit several of the more trivial parameters of the model as $n$ and $d$ are subsumed within the data given by $\mathcal{P}$. On the other hand, $\mu$, is not taken as an input. Instead, given a reference graph $G$, we take $\mu$ to be the point-masses induced by the locations of the vertices given by $\mathcal{P}$. That is, we fix the locations of the vertices of the input data. 

\section{Simulated Drosophila Medulla connectome \label{sec:connectomesimulation}}

In this section, we consider two graphs arising from the connectome of the Drosophila Medulla and fit the parameters of the model described in section \ref{sec:geommodel}. 

\subsection{Drosophila Medulla Connectome \label{sec:originalconnectome}}

As an example graph to validate this model, we consider a connectome of the visual system of the Drosophila medulla consisting of 1781 different neurons with 33,641 synapses \cite{neurodata}.  Note that this is not the full Drosophila medulla, which has approximate 40,000 neurons \cite{generaldrosophilla}. Instead, this is a mapping of a physically extracted sample of full medulla. Briefly, the authors of \cite{naturenetwork} consider seven neuron columns important for motion detection. The neurons in the central column are traced, that is, all of their connections to neurons in neighboring six columns are recorded. Each synapse is given a pair of locations, a pre-synaptic and and post-synaptic position, corresponding to the two halves of the connection corresponding to the two neurons involved.

Biologically speaking, synapses take up small volumes, while the neurons themselves can be large with many long, spindly dendrites that branch across a large volume and intertwine with dendrites of other neurons. Each synapse represents a directed edge between the neurons (from the pre-synaptic neuron to the post synaptic neuron), where a pair of neurons may share multiple directed connections between each other, in either direction. Occasionally neurons have synapses with themselves.

In order to validate this model, we represent the neurons as the nodes of the graph while the synapses are represented by edges. We further simplify the connectome by ignoring the directions and multiplicities of the edges between the nodes. That is, we do not keep track of how many synapses connect one neuron to another, or in which direction the synapse allows for current flow. We only track whether or not two neurons have a connection between them. We do, however, keep track of when a neuron is connected to itself. That is, we allow for self loops in the graph. 

Finally, as we are studying a geometric model, we assign positions to the neurons. Each synapse in the connectome is given a pre-synaptic and a post-synaptic coordinate, as well as a pre-synaptic neuron and a post-synaptic neuron. On a directed multi-graph, one may think of this as each directed edge being decorated by a pair of coordinates (pre and post coordinates). To assign a single coordinate to a neuron, we take the geometric centroid of the synaptic coordinates associated to it. I.e. if we were to model this as a directed multi-graph, we take the centroid of the pre-synaptic coordinates of all the edges from a node, and all the post-synaptic coordinates of all the edges to the node. In this manner, we define a point position to a fundamentally three-dimensional, non-convex volume filling object. Note that we have no reason to believe that the coordinate thus assigned to the neuron lies in the physical neuron in the medulla, let alone reason to believe that it corresponds to any meaningful part of the neuron, such as the location of the soma, or the center of mass of the neuron. However, given the data at hand, and the intrinsic difficulty of mapping the geometry of all the neurons in a connectome, we use this as an approximation of the ``location'' of each neuron, see Figure \ref{fig:distributionofneurons}. Figure \ref{fig:graphofconnectome} is a graphic of the connectome described here.

\begin{figure} [h]
  \centering
  \begin{subfigure}[b]{0.3\linewidth}
    \includegraphics[width=\linewidth]{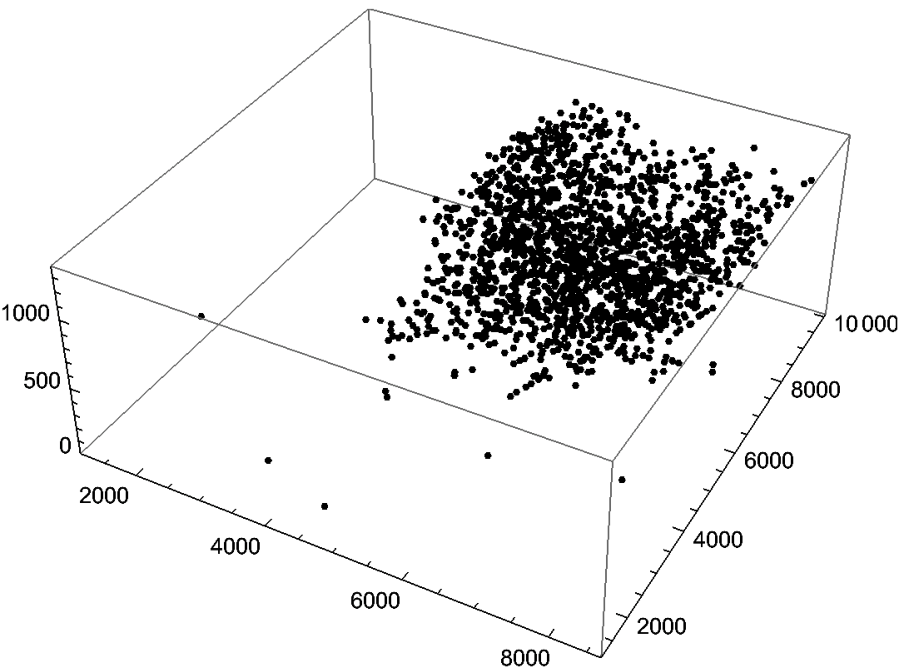}
  \end{subfigure}
  \begin{subfigure}[b]{0.3\linewidth}
    \includegraphics[width=\linewidth]{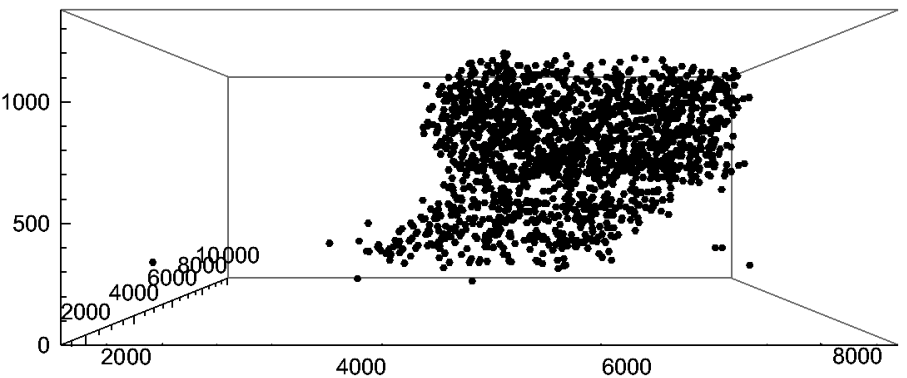}
  \end{subfigure}
  \begin{subfigure}[b]{0.3\linewidth}
    \includegraphics[width=\linewidth]{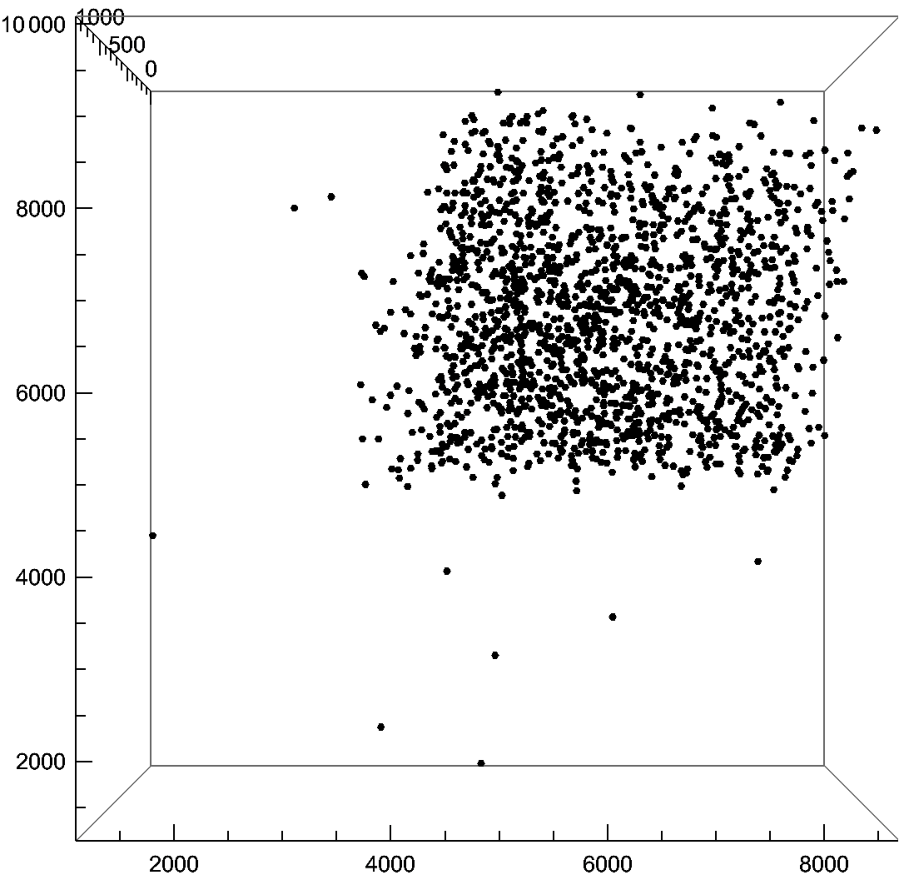}
  \end{subfigure}
  \caption{Three views of the graph of the connectome of the Drosophila Medula, with the vertex locations computed as the centroid of the synapse locations. Note that the dimensions of the bounding box of the connectome is $7290 \; \mu m \times 8685 \; \mu m \times 1212 \; \mu m$. Note that $98\%$ of the data lies in a bounding box of dimensions $4300 \; \mu m \times 4005 \; \mu m \times 1130 \; \mu m$.  } 
  \label{fig:distributionofneurons}
\end{figure}

\begin{figure} [h]
  \centering
   \includegraphics[scale = .5]{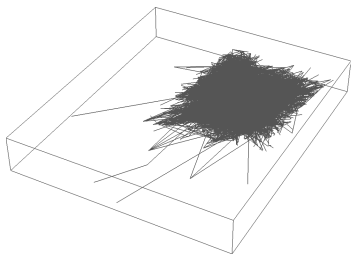}
   \caption{The connectome of the of the Drosophila Medula connectome}
  \label{fig:graphofconnectome}
\end{figure} 

In summary, we model the connectome found in \cite{neurodata} as a simple graph with self loops with $1781$ and $9016$ edges, $105$ of which are self loops. However, there is a further wrinkle that must be explored. As stated in \cite{naturenetwork}, the authors trace the $379$ neurons in the central neural column extracted from the Drosophila medulla. That is, all of neighborhoods of these central $379$ vertices are traced, but connections between other neurons are not necessarily traced. In other words, this connectome may be better modeled if seen as a union of stars of the fully traced neurons. 

In the dataset present in \cite{neurodata}, there are $358$ ``named'' neurons (i.e. these are the neurons with alpha-numeric labels) that {correspond roughly to the central $379$ nodes.}\footnote{This correspondence has been verified by the authors of \cite{neurodata}.}. In aggregate, the location (as determined above) of the named neurons are slightly closer to the centroid than all vertices. Furthermore, the mean degree of the named neurons is 30.05 versus 10.01 for all neurons.

Therefore, in this paper, we consider two graphs: the \emph{full connectome} and the \emph{named subgraph}. The full connectome is the graphical representation of the entire data set (as described above). The named subgraph is the induced subgraph of the full connectome defined by the $358$ named vertices. The named subgraph has 3254 edges, 72 of which are self loops. For our purposes, each of these networks has a select few nodes with an overwhelmingly high degree, violating the Chung-Lu condition. For simplicity, we study the networks with these outlier nodes removed which we denote $\namedtwo$ and $\fullone$.

\subsection{Justification for a geometric model \label{sec:geomjustification}}

The generic geometric Chung-Lu model presented in this paper is particularly well suited for creating synthetic models for a connectome. In particular, neurons are more likely to connect to those closer than farther from it, indicating that there is a geometric component to the formation of this network. On a mesoscale (studying the connections between brain regions roughly the size of the entire graph studied in this paper) there is successful work modeling the strength of connections between brain regions using a continuous spatial propagation model \cite{fieldtheory}. Furthermore, different types of neurons have different numbers of synapses, but this number is by no means fixed. In fact, there is evidence that the these connections change over an organisms lifespan \cite{rewiring}. Furthermore, different neurons of the same type having different valencies, for reasons that are not fully biologically understood. This can be most easily seen in the small Caenorhabditis elegans connectome \cite{VarshneyCelegans}. Therefore, it is natural to work with a Chung-Lu type model where one associates an expected degree or intensity to each vertex. 

In our specific context, we make a few more observations. First, for the full connectome minus one vertex, $\fullone$, the mean distance between any two nodes is 2000.40 $\mu$m; in contrast, the mean distance for any edge 963.25$\mu$m. This substantial discrepancy indicates a strong relationship between the physical distance between two neurons and the likelihood of a connection between them. Also, as illustrated in Figure \ref{fig:indvcumedge}, the distributions of edge lengths appear to be similar but shifted based upon the valency of the node. This suggests that both the distance between two nodes as well as their degrees (i.e., intensities) should play a role in the connection function. Figure \ref{fig:nthneighbors} shows the neighborhood of the first, hundredth and two hundredth most valent vertex in the connectome of the Drosophila medulla graph. 

\begin{figure} [h]
  \centering
  \begin{subfigure}[b]{0.3\linewidth}
    \includegraphics[width=\linewidth]{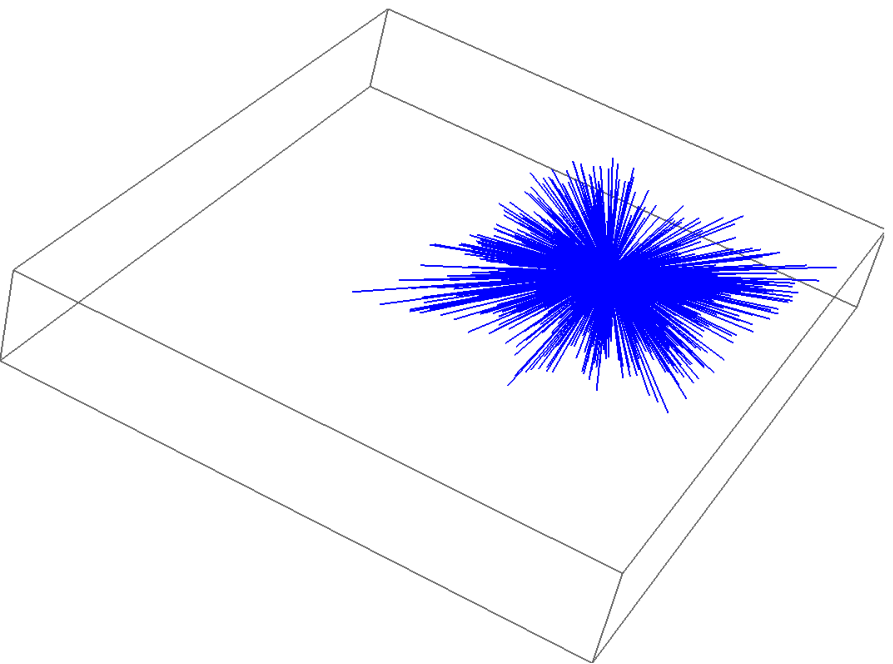}
    \caption{Neighorbohood of the vertex with the highest valence.}
  \end{subfigure}
  \begin{subfigure}[b]{0.3\linewidth}
    \includegraphics[width=\linewidth]{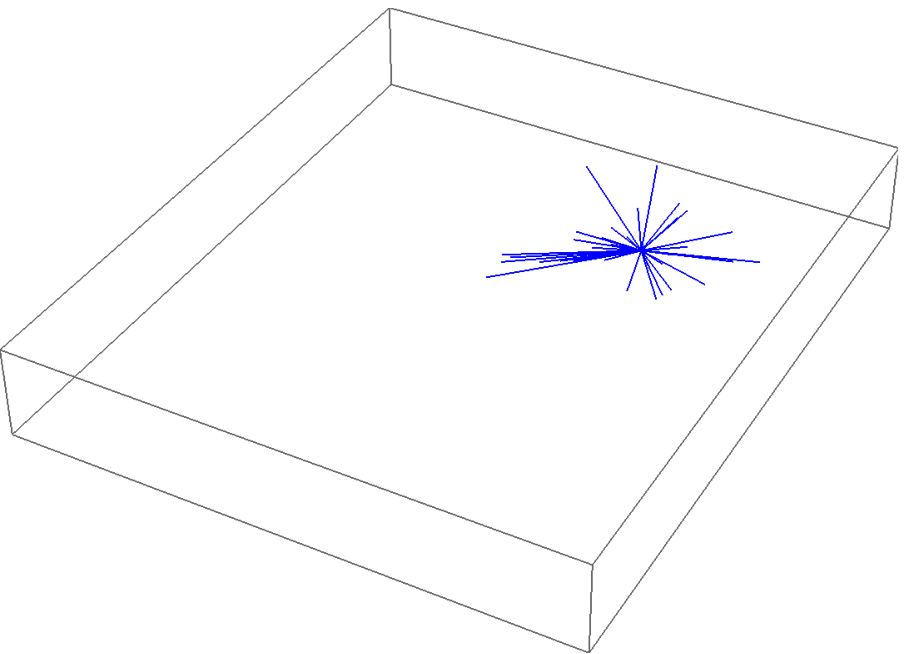}
    \caption{Neighorbohood of the vertex with the $100^{th}$ highest valence.} 
  \end{subfigure}
  \begin{subfigure}[b]{0.3\linewidth}
    \includegraphics[width=\linewidth]{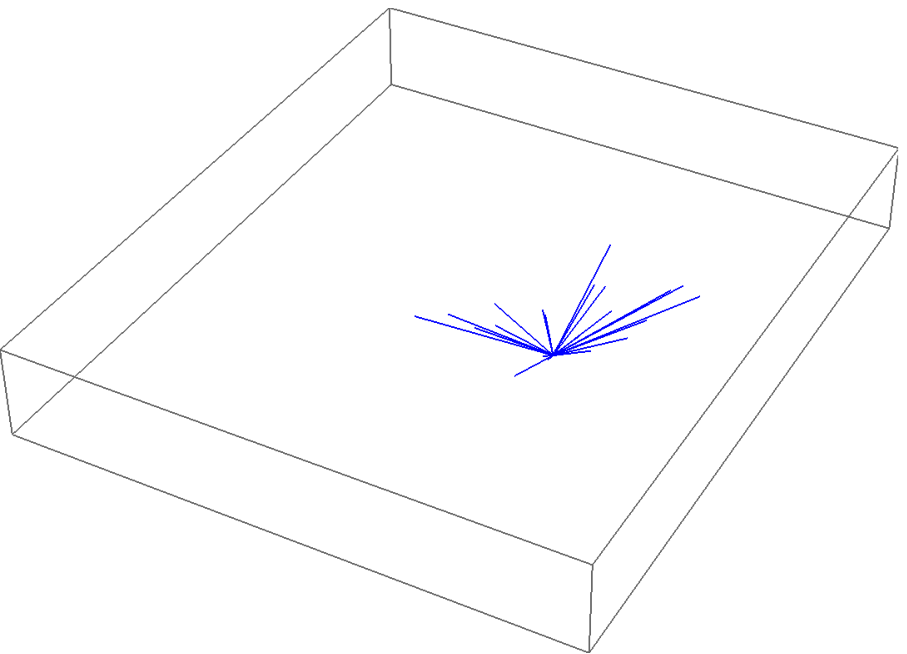}
    \caption{Neighorbohood of the vertex with the $200^{th}$ highest valence.}
  \end{subfigure}
  \caption{Note that while the diameter of the largest connected component in Figure \ref{fig:graphofconnectome} has diameter 6, the neighborhoods of the vertices with the highest, $100^{th}$ highest and $200^{th}$ highest valencies are 927, 37 and 23 respectively, suggesting a correlation between the valency of a vertex and is average distance from neighbors.}
  \label{fig:nthneighbors}
\end{figure}  

\begin{figure} [h]
  \centering
  \begin{subfigure}[b]{0.3\linewidth}
    \includegraphics[width=\linewidth]{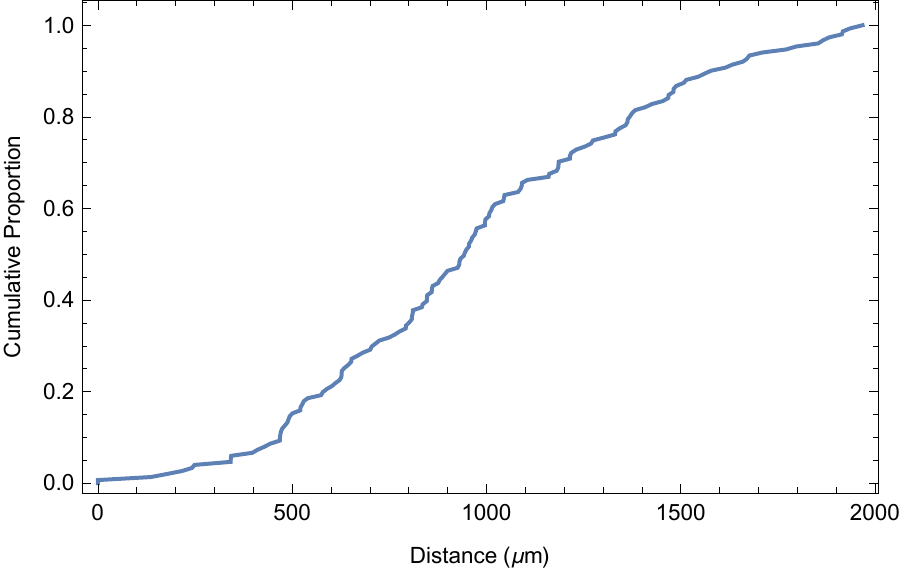}
    \caption{Empirical cumulative distribution of edge distances for the highest valency node in the full connectome minus a vertex.}
  \end{subfigure}
  \begin{subfigure}[b]{0.3\linewidth}
    \includegraphics[width=\linewidth]{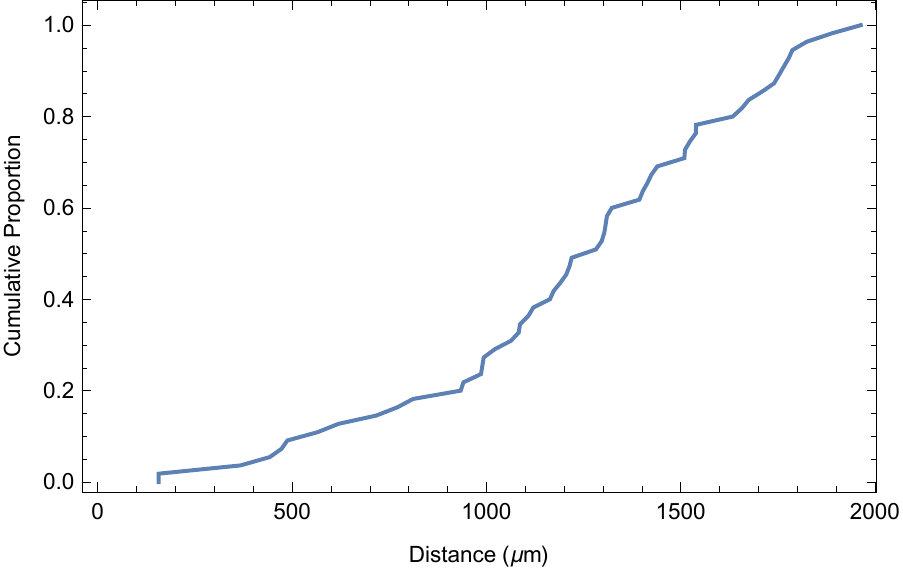}
    \caption{Empirical cumulative distribution of edge distances for the 50-$th$ highest valency node in the full connectome minus a vertex.} 
  \end{subfigure}
  \begin{subfigure}[b]{0.3\linewidth}
    \includegraphics[width=\linewidth]{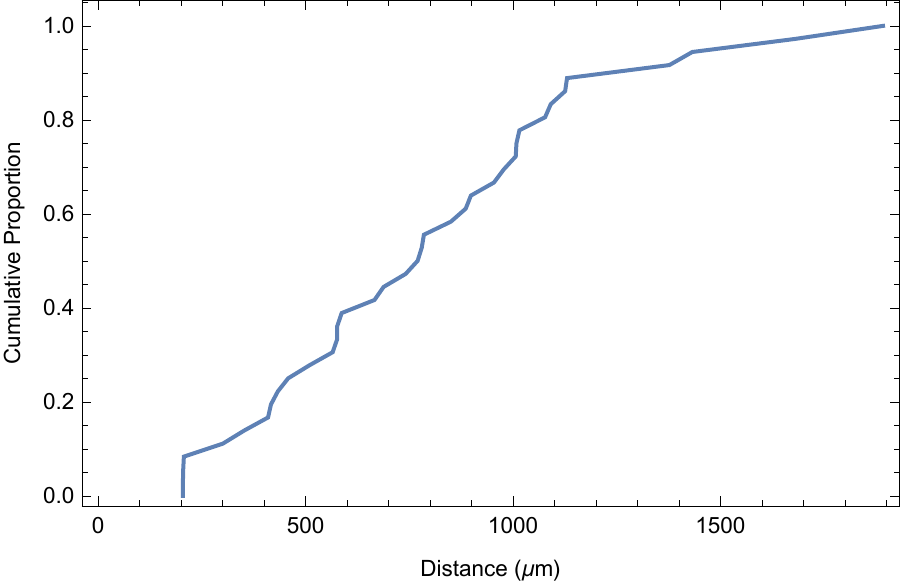}
    \caption{Empirical cumulative distribution of edge distances for the 100-$th$ highest valency in the full connectome minus a vertex.}
  \end{subfigure}
  \caption{The empirical cumulative distribution of edge distances have similar `S' shaped in spite of its valence.} 
  \label{fig:indvcumedge}
\end{figure}  

Finally, it is worth noting that the last assumption of our model, that the distances and intensities of our vertices are independent is not a biological observation. It is an assumption that is needed for mathematical simplicity. In fact, there is biological evidence that this assumption may not hold, For the Wistar rat, the number of synaptic connections vary greatly between different layers of the brain \cite{synapesedistribution}. It is also well known that the neurons in a brain have non-uniform physical structure. For example, this is explored in detail for the Drosophila Medulla in \cite{generalconnection}.


\subsection{Estimated Parameters for the Drosophila Medulla connectome \label{sec:modelparams}} 




 In this section, we fit the parameters presented in section \ref{sec:estimatedmodel} for the two graphs $\namedtwo$ and $\fullone$ defined in section \ref{sec:originalconnectome}. 
 
First, we note that after fitting $\hat F_1$ and $\hat F_2$ on the named subgraph $G_{\textrm{named}}$ and the full connectome, $G_{\textrm{full}}$ the Chung-Lu condition $\hat \rho_i^2 < \sum_k \hat \rho_k$ does not hold for the two highest values of $\hat \rho_i$ for the named subgraph, and the highest value for the full graph. One potential solution to this is to follow  \cite{bringmann2019geometric}, and set any probability greater than 1 generated in this manner to 1. Another solution (the one adopted in this paper) is to omit the vertex with highest degree from our calculations. For the named subgraph, we disregard the 2 most valent vertices and call the graph $\namedtwo$. For full graph, we omit the single most valent vertex and call the graph $\fullone$. After these omissions, both $\namedtwo$ and $\fullone$ satisfy the Chung-Lu condition. 

Since our goal is to generate a new simplified model that empirically reproduces many features of the underlying data based on a single instance, it is not appropriate to formally estimate parameters with confidence intervals nor test a null hypothesis. Instead, the parameters learned from the data are a necessary step towards creating the connection function in the model presented in section \ref{sec:estimatedmodel}.

Since there is no posited true simple model, there is no null hypothesis to reject and no substantive interpretation of any ``standard errors'' that could be calculated, and thus they are not reported. Instead, to validate our parameters, we present the mean squared error of the observed data from the fit functions. 

In particular, we wish to estimate the probability $P(i \sim j, d_{i,j} | \rB)$ logistically with the function \bas \hat F_1 =   \frac{\varep}{1 + \exp(\alpha_1  + \beta_1x)}\eas and the probability $P(d_{i,j} | \rB)$ with the logistic function \bas \hat F_2 = \frac{1}{1 + \exp(\alpha_2  + \beta_2x)}\;.\eas 

Note that we write the logistic functions above as $\frac{L}{1 + \exp(\alpha + \beta x)}$, and not $\frac{L}{1 + \exp(-k(x - x_0))}$ which is not the standard notation. In the notation of this paper, $-\beta$ give the growth rate of the function. In particular, $\beta$ needs to be a negative number, while the logistic function is centered at $x = -\frac{\alpha}{\beta}$.

In Table \ref{table:parambounds}, we report the estimated values for the constants $\alpha_1$ and $\beta_1$ that define $\hat F_1(x)$ and the constants $\alpha_2$ and $\beta_2$ that define $\hat F_2(x)$. 



\begin{table}[h!]
\centering
\begin{tabular}{|c | c || c |} \hline
 & $\namedtwo$ & $\fullone$ \\ \hline
$\alpha_1$ & 2.94589 &  2.76129 \\ \hline
$\beta_1$ & -0.00293112  & -0.00300996 \\ \hline
$\alpha_2$ & 3.16284  & 3.31531 \\ \hline
$\beta_2$ & -0.0018741  & -0.00170436  \\ \hline
\end{tabular} \\

\caption{Estimated parameters defining $\hat F_1$ and $\hat F_2$ using the reference graphs $\namedtwo$ and $\fullone$. }
\label{table:parambounds}
\end{table} 

Next we consider the estimated and observed values at $x = 0$ in Table \ref{table:limiting values}. Recall from Remark \ref{rmk:F_1(0)behavior} that that $\lim_{x \to 0^+}F_1(0) = \frac{|\textrm{self loops}|}{n^2}$. This leads to the observed values of 0.00056022 for $\namedtwo$ and 0.0000331398 for $\fullone$. However, the estimated values from the model are an order of magnitude higher: 0.0020432 for $\namedtwo$ and 0.000269171 for $\fullone$. Furthermore, the observed value for $F_2(0) = 1/n$, or the probability that the node $i$ is the same as the node $j$.  This leads to the observed values of 0.00280899 for $\namedtwo$ and 0.000561798 for $\fullone$. Again, the estimated values are orders of magnitude larger than the observed values.
\begin{table}[h!]
\centering
\begin{tabular}{|c |c | c || c |c |} \hline
 & \multicolumn{2}{c||}{$\namedtwo$} & \multicolumn{2}{c|}{$\fullone$} \\ \cline{2-5}
 & estimated & observed  & estimated & observed  \\ \hline
$\lim_{x \rightarrow 0} \hat F_1(x)$ & 0.00224016 &  0.00056022 &  0.000303395 & 0.0000331398  \\ \hline
$\lim_{x \rightarrow 0} \hat F_2(x)$ & 0.0405882  & 0.00280899 &  0.0350498 & 0.000561798   \\ \hline
\end{tabular}
\caption{Model-estimated and observed limiting values of the functions $\hat F_1$ and $\hat F_2$ for loops and edges of small length. The impact of these discrepancies are shown in Figure \ref{fig:df1df2fits}.} 
\label{table:limiting values}
\end{table} 

In spite of these inconsistencies, we show in Section \ref{sec:edgesetc.} that the synthetic graphs generated by this model match the graphical characteristics of the reference graphs $\namedtwo$ and $\fullone$. Furthermore, we show that the estimated values of $\hat F_1(x)$ and $\hat F_2(x)$ overall do not differ greatly from the observed value. Namely, in Table \ref{table:errors} give the Mean Squared Error (MSE) of the model as well as the mean percent error for $\hat F_1(x)$ and $\hat F_2(x)$ over the entire range of distances, and away from $x = 0$ where the model fit is the worst.

For $\hat F_1(x)$ the mean percent error is less than $5\%$ overall, and less than $1\%$ for values of $x > 500$. The mean percent error for $F_2(x)$ is greater, as indicated by large difference in the estimated vs observed values near $x = 0$ (Table \ref{table:limiting values}). However, we see that away from this region, namely for $x > 500$, the mean percent error falls to less than $5 \%$. There is a similar pattern with the MSE: the overall MSE is greater for F2 than it is for F1, but for both fits, this is dominated by the behavior for low $x$. For $x>0$ the MSE drops by at least an order of magnitude.





\begin{table}[h!]
\centering
\begin{tabular}{|c|c |c | c || c |c |} \hline
& & \multicolumn{2}{c||}{$\namedtwo$} & \multicolumn{2}{c|}{$\fullone$} \\ \cline{3-6}
  & & all $x$ & $x > 500$  & all $x$ & $x > 500$  \\ \hline
\multirow{2}{*}{$\hat F_1(x)$} & MSE & 0.0138996 & 0.000175405  &  0.0201616 & 0.000250721 \\ \cline{2-6}
& Mean Percent error & 4.24187 \%  & 0.822747 \%  &  4.68995 \% &  0.901061 \%   \\ \hline
\multirow{2}{*}{$\hat F_2(x)$} & MSE & 0.0538964 & 0.00856598  & 0.0694437  & 0.0127325  \\ \cline{2-6}
& Mean Percent error & 12.215 \% & 5.32331 \%  &  12.655 \% & 4.9259 \%  \\ \hline
\end{tabular}
\caption{Mean square errors and  Mean percentage errors of $F_1$ and $F_2$ for the graphs $\namedtwo$ and $\fullone$. These fits are good for larger edges.} 
\label{table:errors}
\end{table}

This behavior is clear from Figure \ref{fig:f1andf2fits}, which shows the logistic fits of the cumulative functions $\hat F_1$ and $\hat F_2$ for the graph $\namedtwo$ and $\fullone$.

\begin{figure}
\begin{subfigure}[t]{0.9\linewidth}
    \centering
         \includegraphics[width=0.45\textwidth]{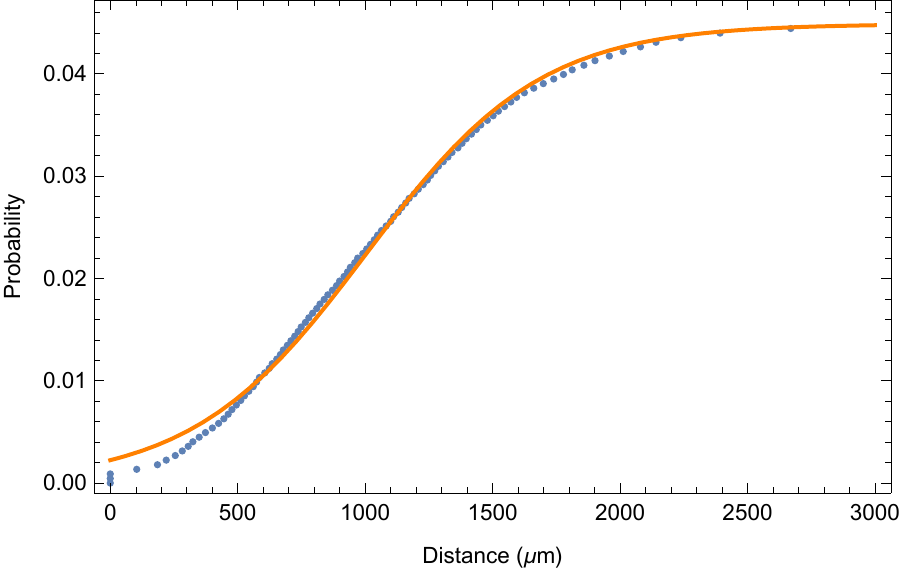}
     \quad \includegraphics[width=0.45\textwidth]{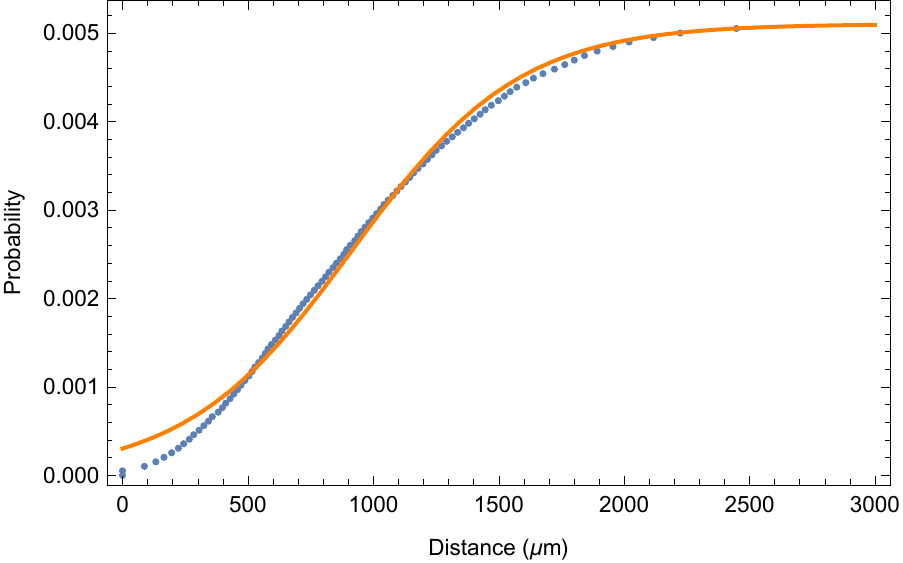}
     \caption{Fits for $F_1(x)$ for $\namedtwo$ (left) and $\fullone$ (right).}
    \end{subfigure}
    
    \begin{subfigure}[t]{0.9\linewidth}
    \centering
         \includegraphics[width=0.45\textwidth]{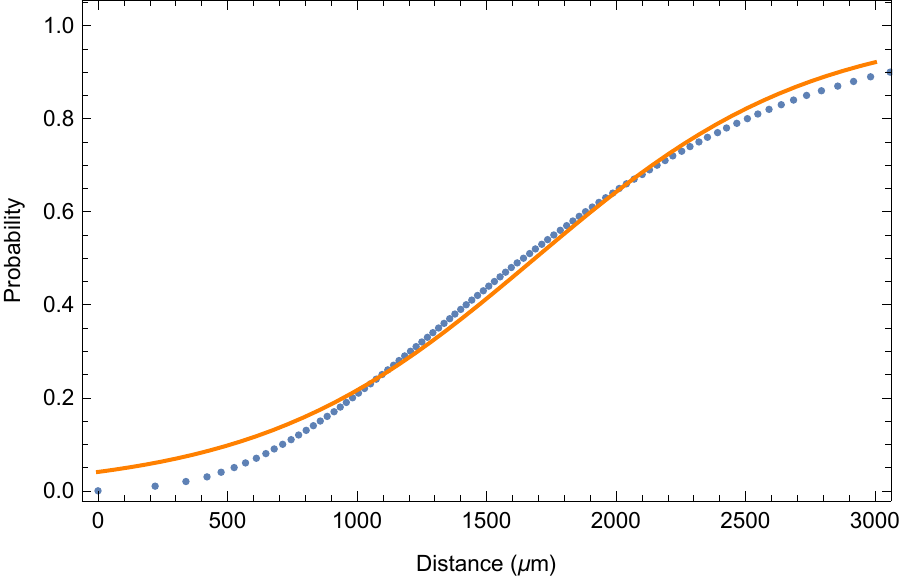}
     \quad \includegraphics[width=0.45\textwidth]{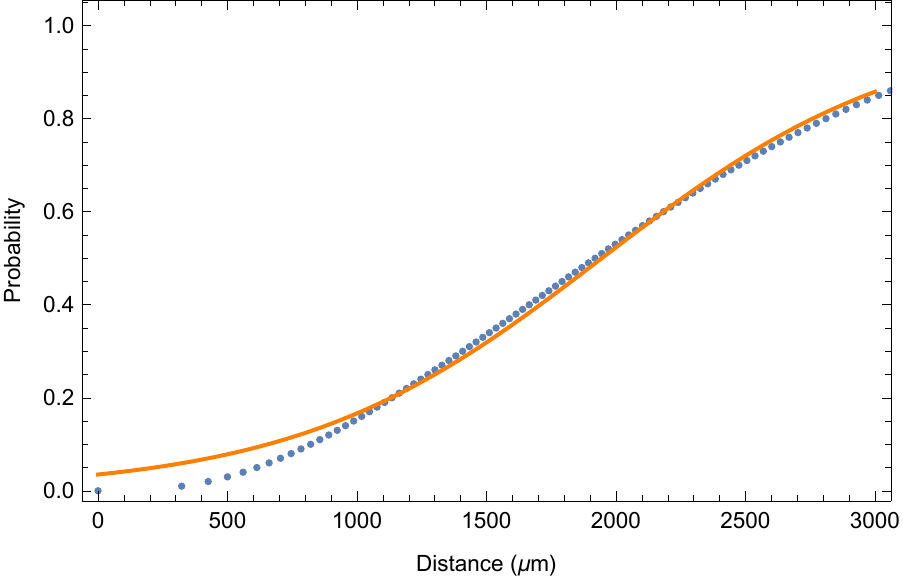}
     \caption{Fits for $F_2(x)$ for $\namedtwo$ (left) and $\fullone$ (right).}
    \end{subfigure}
    
    \caption{The empirical probabilities (blue) for $F_1$ (top) and $F_2$ (bottom) against their modeled probabilities (orange) ($\hat F_1$ and  $\hat F_2$ respectively.). The fits are good, particularly for longer edges.  }
    \label{fig:f1andf2fits}
\end{figure}

Figure \ref{fig:df1df2fits} shows the empirical and estimated ratio $\hat F_1'(x)/ \hat F'_2(x)$, which is central to the connection function defined in Theorem \ref{res:edge|dist,valence}. As with the estimated versus observed values for $\hat F_1(x)$ and $\hat F_2(x)$, we see that the estimated value is a much better fit for large values of $x$ for both $\fullone$ and $\namedtwo$. In other words, the synthetic models generated by these graphs will have fewer edges between nodes that are physically close together than observed in the reference graphs. 

\begin{figure}
    \centering
         \includegraphics[width=0.45\textwidth]{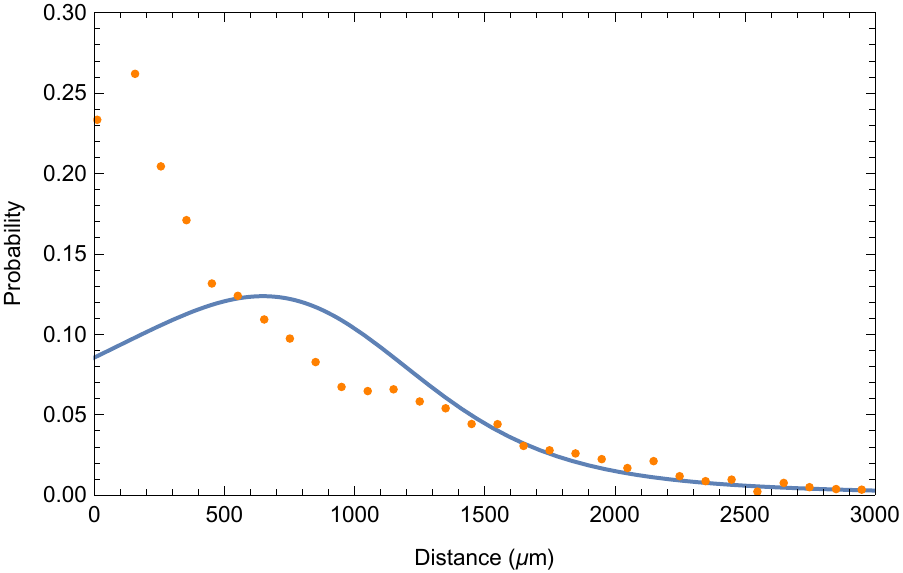}
     \quad \includegraphics[width=0.45\textwidth]{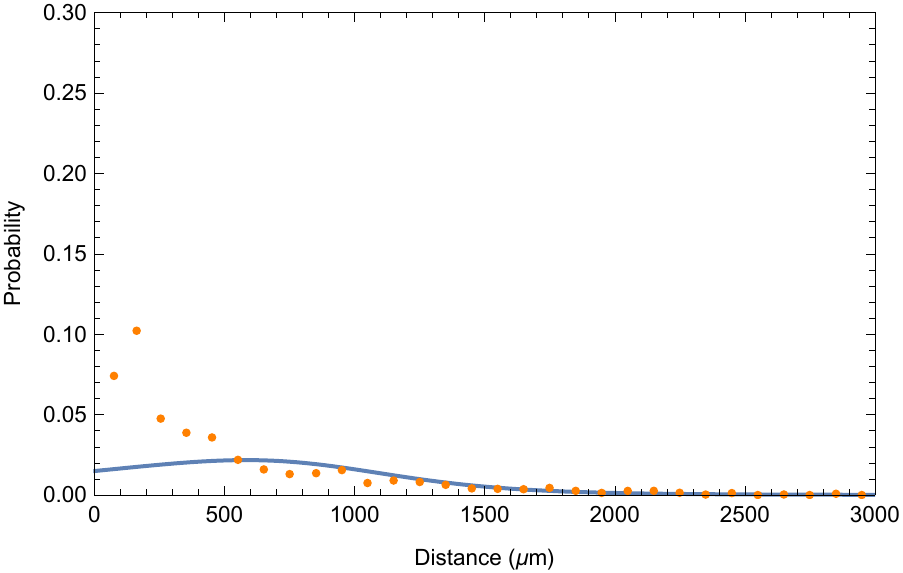}
     \caption{Plot of $\hat F_1'(x)/ \hat F'_2(x)$ (blue) against the empirical probability of an edge at a fixed distance ($P(i \sim j~| d_{i,j}=x)$) (orange). The estimated value is a much better fit for large values of $x$ for both $\fullone$ and $\namedtwo$}

    \label{fig:df1df2fits}
\end{figure}

Finally, Figure \ref{fig:rhodegscatter} addresses the parameters $\hat{\rho_i}$ as defined in display \eqref{eq:rhohat}. The figure shows that there is a linear relationship between the observed degree and the estimated intensity near the centroid of the neurons,  i.e. where the boundary effects laid out in Section \ref{sec:specific} have the smallest influence on the parameters. Furthermore, this proportionality constant near the centroid is less than one. As one moves further from the centroid, i.e. when the boundary of the graph plays a greater role in determining the observed degree of the graph, the estimated intensity increases for any given degree. This is exactly as is expected: the closer a given vertex is to the boundary (i.e. the further it is from the centroid) the more likely it is to have connections to other vertices that are not within the boundary, and thus are cut off. 

\begin{figure}
    \centering
         \includegraphics[width=0.40\textwidth]{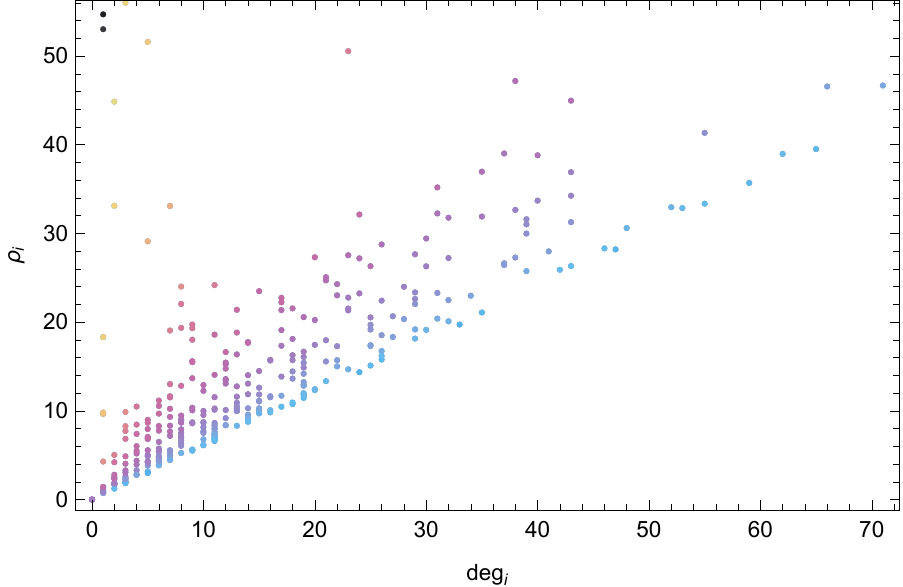}
     \quad \includegraphics[width=0.40\textwidth]{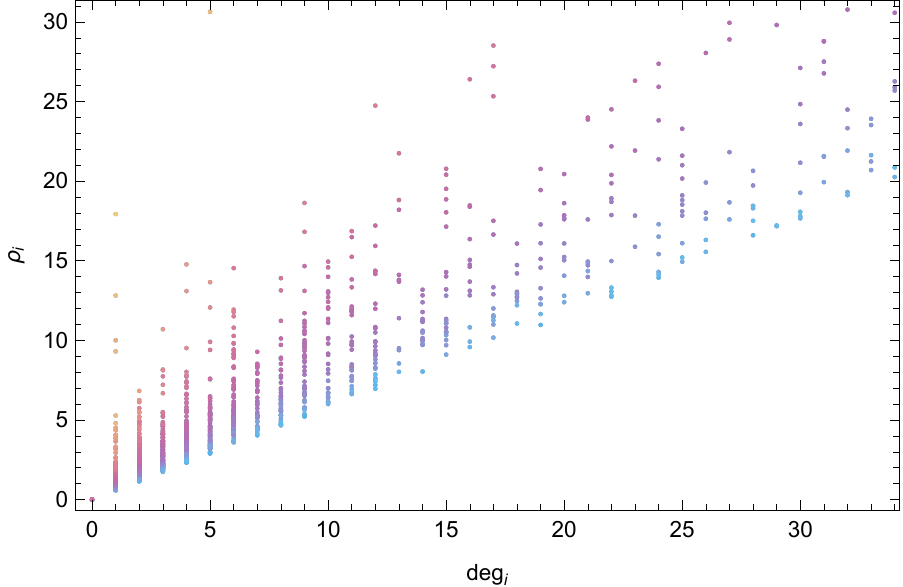}~~\includegraphics[width=.1\textwidth]{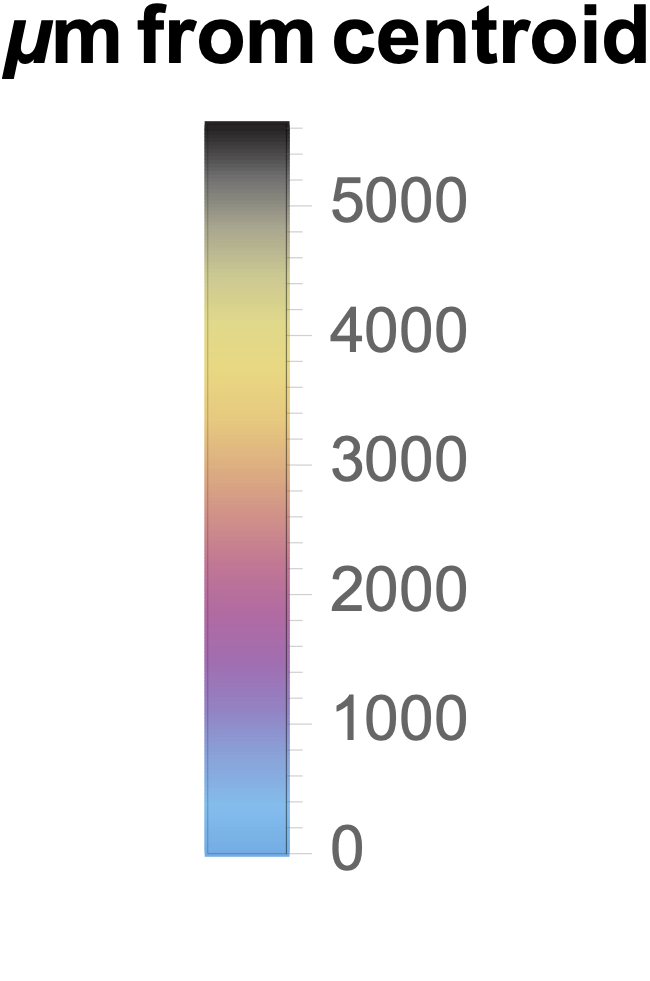}
    
     \caption{Scatter plots of the degree of each node in the reference graph against its associated intensity, $\rho_i$. $\namedtwo$ is on the left and $\fullone$ is on the right. Colors indicate the distance of that vertex to the centroid of all node. There is a linear relationship between the observed degree and the estimated intensity near the centroid of the neurons.}

    \label{fig:rhodegscatter}
\end{figure}

\section{Simulation results\label{sec:simulation}}

In this section, we provide validation for the model that we have developed in Section \ref{sec:estimatedmodel} and fit in Section \ref{sec:modelparams}. We do this by generating 200 simulated graphs from the connection function (Theorem \ref{res:edge|dist,valence}) and plot the histograms of several key network properties on this sample of 200 graphs. We then show that the reference graphs $\namedtwo$ and $\fullone$ fall well within the range of values from these simulated graphs.

For our simulations we use the estimated parameters in Section \ref{sec:modelparams} and construct 200 graphs for $\namedtwo$ and 100 graphs for $\fullone$. We emphasize that, the positions of the nodes are fixed; however, the assignment of the $\rho_i$ are randomly permuted among them.


\subsection{Edges, vertices and connected components\label{sec:edgesetc.}}

If the connection function produces a good model for the graph, then the simulated graphs would have similar basic graph characteristics, such as the number of edges and number of connected components, etc. In Table \ref{tab:namedstats}, we report the mean and standard deviation of various graph characteristics for models built from $\namedtwo$ and $\fullone$ with the corresponding values from the original graph. For many of the characteristics including the maximum degree, the number of triangles and the number of closed 4-walks, the values for the reference graph are within two standard deviations of the mean of the simulated graphs. Figure \ref{fig:namedhists} provides a comparison of the simulated distribution of these characteristics. In all of these cases, the reference graph has characteristic counts that fall within the range of the simulations. We emphasize that the random assignment of the $\rho_i$ among the vertices appears induce a large variance for each of these characteristics; for instance, if nodes closer to the boundary are assigned larger $\rho_i$, then the resulting network will have fewer edges (and hence triangles and 4-walks, etc.). This variance is realized in the simulations for both $\namedtwo$ and $\fullone$, as many of the parameters for the reference graph fall in the lower of range of the simulations. None the less, the reference graphs are is consistent with the positive correlation among these parameters as illustrated in Figure \ref{sub:listmaxvstri} and \ref{sub:listmaxvstrifull}.

While the simulations produce appropriate numbers of triangles and closed 4-walks as well as a comparable maximum degree, there are significant discrepancies between the model and the reference graph are in the number of  self-loops and the number of connected components. Both of these short-comings can be attributed to the effect of the model on node(s) that are a short distance apart (i.e. the mismatch of the expected and observed values for $F_1(x)$ and $F_2(x)$ near $x = 0$ discussed in Section \ref{sec:modelparams}.)

In particular, from Table \ref{table:limiting values} and Figure \ref{fig:f1andf2fits}, at $x=0$, $F_2$ is significantly more overestimated than $F_1$. As a result, the ratio $\hat F_1/\hat F_2$ underestimates the conditional probability $P(i \sim j | d_{i,j})$. 
However, the connection function given in Theorem \ref{res:edge|dist,valence} depends on the ratio $\hat F_1/\hat F_2$. 
From  Figure \ref{fig:polycompare}, we see that the estimated ratios in that define the connection function are also lower than the empirically observed ratios. 
This leads to an underestimation of the number of self loops and connections at short length in the model than is observed. In particular, if one only considered Remark \ref{rmk:F_1(0)behavior}, were $F_1(0)$ gives the number of self loops, one would expect the simulated graphs to have, on average an larger number of self-loops than the reference graphs, which is in contradiction with the data presented in Table \ref{tab:namedstats}. However, the observed difference in the estimated ratio $\hat F_1/\hat F_2$ versus the observed value of $P(i \sim j | d_{i,j} = 0)$ counteracts this overestimation.

In addition, the biophyisical nature of the reference graph makes it nearly a priori connected with few isolated neurons. For instance, neurons that fail to make appropriate connection often die (see for example \cite{carlson2018human}); such neurons would not be realized in our reference graph. As a result, the discrepancy concerning the number of self-loops and  the number connected components appears can be attributed to the nuances of the specific application of the model (i.e., connectome networks) as opposed to the overall model itself.  




\begin{table}[h!]
\begin{tabular}{|c |c | c | c|| c |c | c|} \hline
 & \multicolumn{3}{c||}{$\namedtwo$} & \multicolumn{3}{c|}{$\fullone$} \\ \cline{2-7}
 & Ref. Graph &mean &St. Dev. &  Ref. Graph &Mean &St.Dev \\ \hline
Self-loops & 71 & 47.265 & 4.60552  & 105 & 69.76 & 7.16913 \\ \hline  
Connected Components& 8 & 20.415 & 3.06098 & 26 & 234.81 & 10.4134\\ \hline  
Non-Iso. Conn. Comp. & 1 & 1.17 & 0.40238 & 7 & 3.46 & 1.77195 \\ \hline
Max Valency & 71 & 100.725 & 13.0237 & 150 & 263.72 & 53.0845 \\ \hline
Number of Edges & 2851 & 2979.1 & 94.9285 & 8089 & 8194.41 & 221.677 \\ \hline
Number of Triangles & 4400 & 5546.18 & 795.223  & 10224 & 9621.73 & 1402.54 \\ \hline
Number of Closed 4-Walks & 981091 & 1.37876 $\times 10^6$ & 236890 & 2.95063 $\times 10^6$ & 3.00237 $\times 10^6$ & 549348. \\ \hline
\end{tabular}
\caption{Graph statistics for 200 graph generated by the model built from the Named graph, as well the values for the Named graph. Many of values for the reference graph are within two standard deviations of the means for the simulated graphs. } 
\label{tab:namedstats}
\end{table}

\begin{figure} [h!] 
  \centering
  \begin{subfigure}[t]{0.4\linewidth}
    \includegraphics[width=\linewidth]{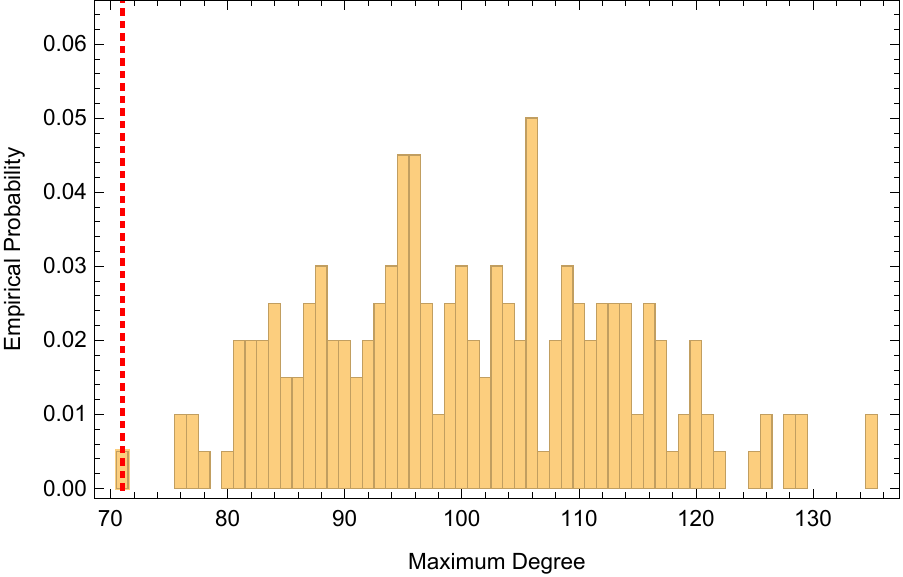}
    \caption{Histogram of maximum degree over 200 model-generated graphs for $\namedtwo$.}
    \end{subfigure} \hfill
    \begin{subfigure}[t]{0.4\linewidth}
    \includegraphics[width=\linewidth]{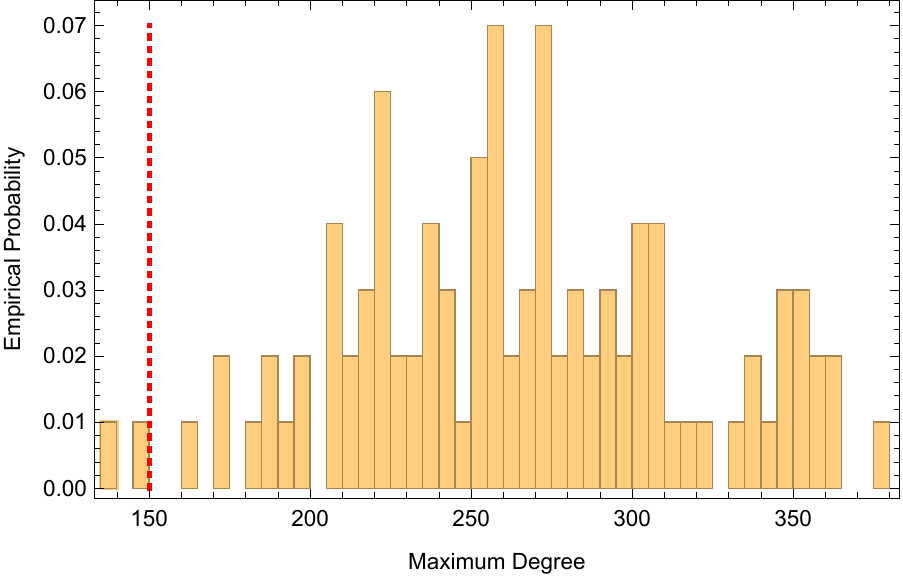}

    \caption{Histogram of maximum degree over 100 model generated $\fullone$.}
    \end{subfigure} \hfill
    \begin{subfigure}[t]{0.4\linewidth}
    \includegraphics[width=\linewidth]{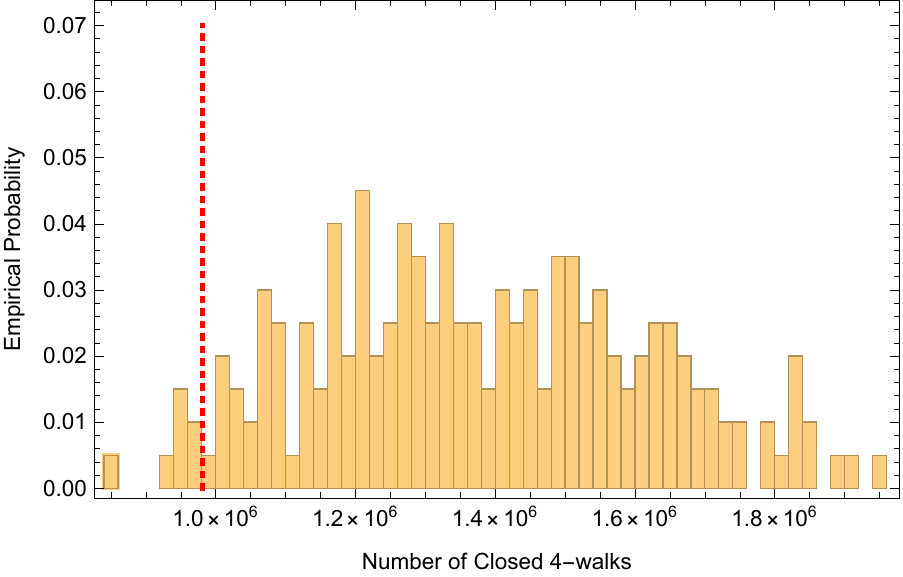}
    \caption{Histogram of the number of closed 4-walks over 200 model-generated graphs for $\namedtwo$.} 
    \end{subfigure} \hfill
     \begin{subfigure}[t]{0.4\linewidth}
    \includegraphics[width=\linewidth]{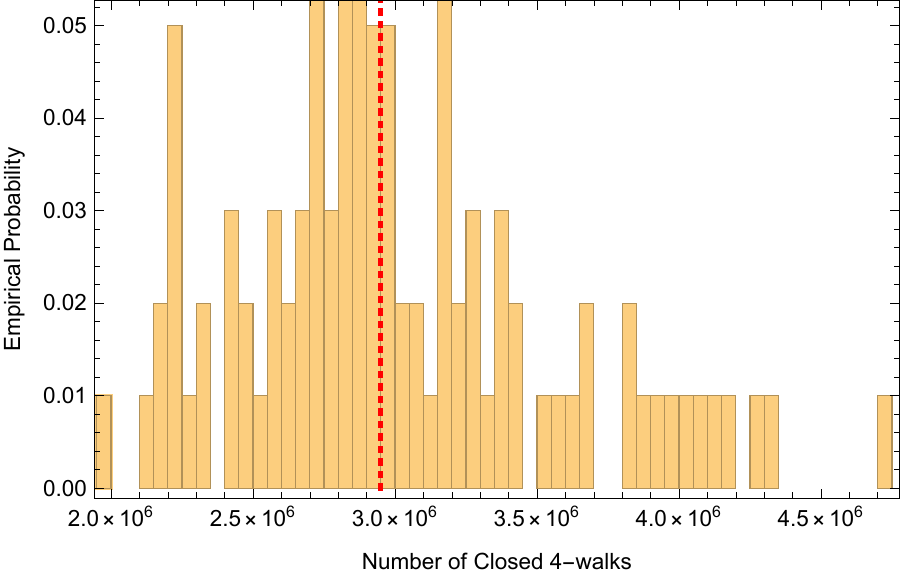}
    \caption{Histogram of the number of closed 4-walks over 100 model-generated graphs for $\fullone$.} 
    \end{subfigure} \hfill
    \begin{subfigure}[t]{0.4\linewidth}
    \includegraphics[width=\linewidth]{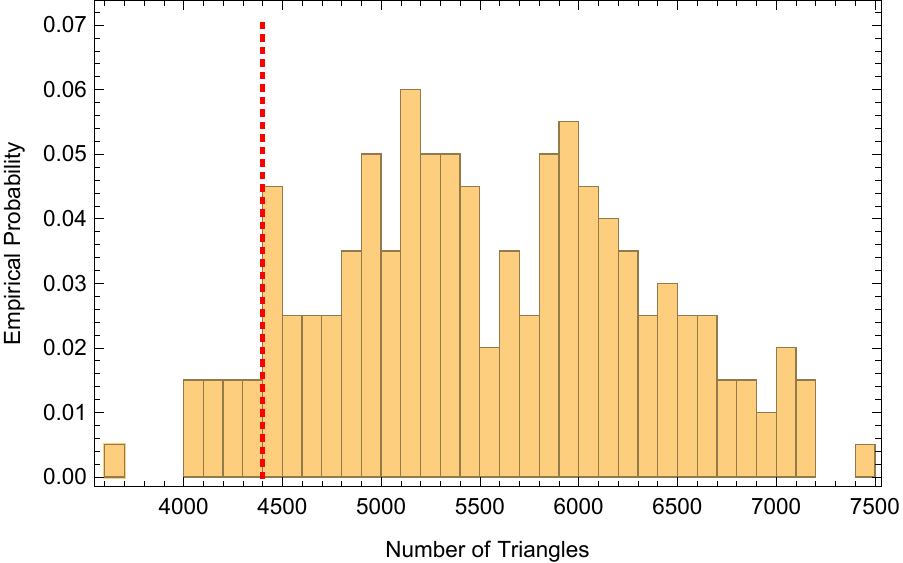}
    \caption{Histogram of three cycles over 200 model-generated graphs for $\namedtwo$.}
    \end{subfigure} \hfill
     \begin{subfigure}[t]{0.4\linewidth}
    \includegraphics[width=\linewidth]{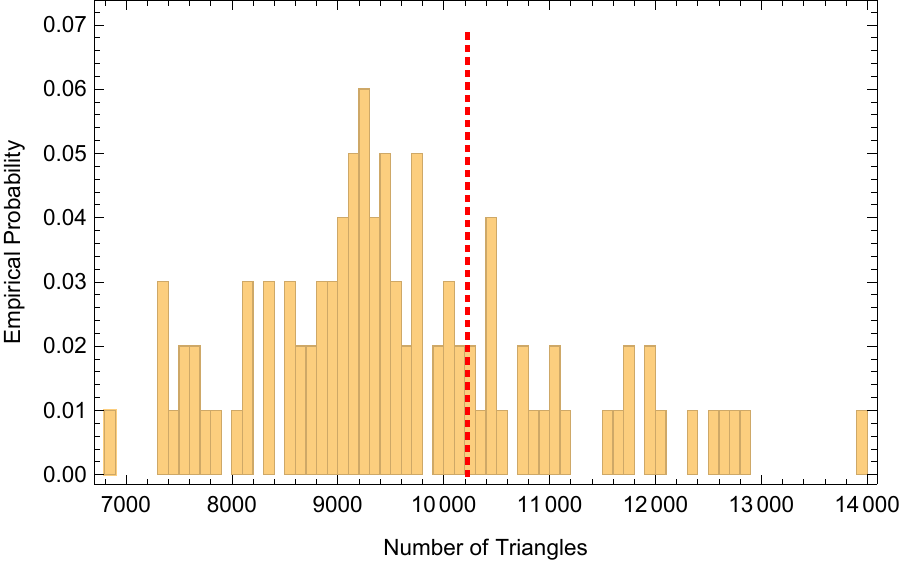}
    \caption{Histogram of three cycles over 100 model-generated graphs for $\fullone$.}
    \end{subfigure} \hfill
    \begin{subfigure}[t]{0.4\linewidth}
    \includegraphics[width=\linewidth]{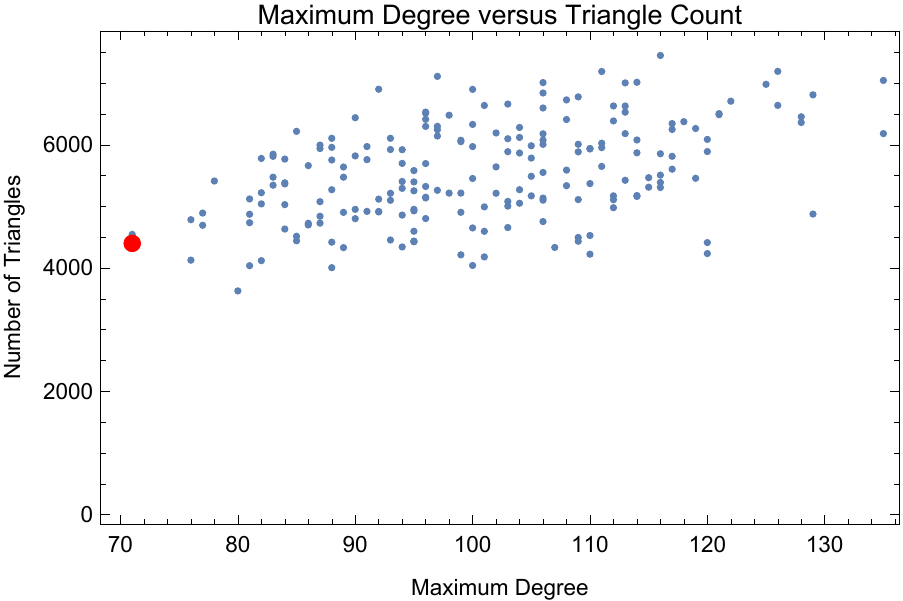}
    \caption{Point plot of the maximum degree versus the number of triangles over 200 model-generated graphs for $\namedtwo$.} 
    \label{sub:listmaxvstri}
    \end{subfigure} \hfill
    \begin{subfigure}[t]{0.4\linewidth}
    \includegraphics[width=\linewidth]{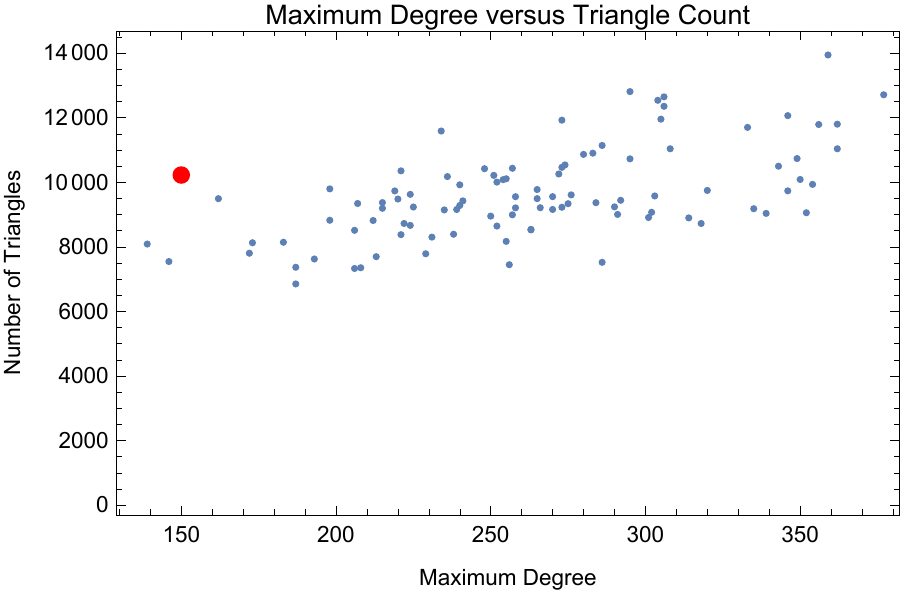}
    \caption{Point plot of the maximum degree versus the number of triangles over 100 model-generated graphs for $\fullone$.}
   \label{sub:listmaxvstrifull}.
    \end{subfigure} \hfill
    
  \caption{Histograms for various graph parameters of randomly-generated graphs from the model built from based on $\namedtwo$ (left) and $\fullone$ (right) with the value for the reference graph indicated in red. In all these cases, the reference graph observation lies within the support of the simulation observations.}
\label{fig:namedhists}
\end{figure}



\subsection{Eigenvalues}

One method for comparing networks is to compare the eigenvalues of various graph-theoretic matrices of the two networks, such as the adjacency matrix or Laplacian matrix. The eigenvalues of these matrices strongly correspond to the local and global properties of the network such as degree, clustering coefficient, motif counts and so on \cite{chung1997spectral}. It possible for two non-isomorphic networks to have the exact same spectrum (i.e., set of eigenvalues) for these graph-theoretic matrices \cite{godsil1982constructing}; however, such cospectral networks have similar properties. Hence, a network model used to generate networks similar to a desired target network, such as in our case, should produce networks with a similar spectrum.

Figure \ref{fig:eigenvalues} shows the histograms for the eigenvalues of the adjacency matrices for both $\namedtwo$ and $\fullone$, overlaid with eigenvalues for a corresponding simulated graph. In general, the randomly generated network has a similar distribution of eigenvalues to the appropriate reference graph. One noticeable discrepancy is that the randomly generated network has distinctly more eigenvalues near 0. In fact, the multiplicity of the exact eigenvalue of 0 for the randomly generated network is significantly higher that the fixed network. We attribute this phenomena to the fact that the randomly generated network is likely to create isolated nodes with no incident edges whereas the original network is {\it a priori} taken to be connected. Each isolated node corresponds exactly to an additional instance of 0 as an eigenvalue of the adjacency matrix. Hence, since our model produces more isolated nodes than the reference graph, one should expect substantially higher multiplicity of 0 as an eigenvalue in the simulations.

\begin{figure} [h!]
  \centering
      \begin{subfigure}[t]{0.4\linewidth}
    \includegraphics[width=\linewidth]{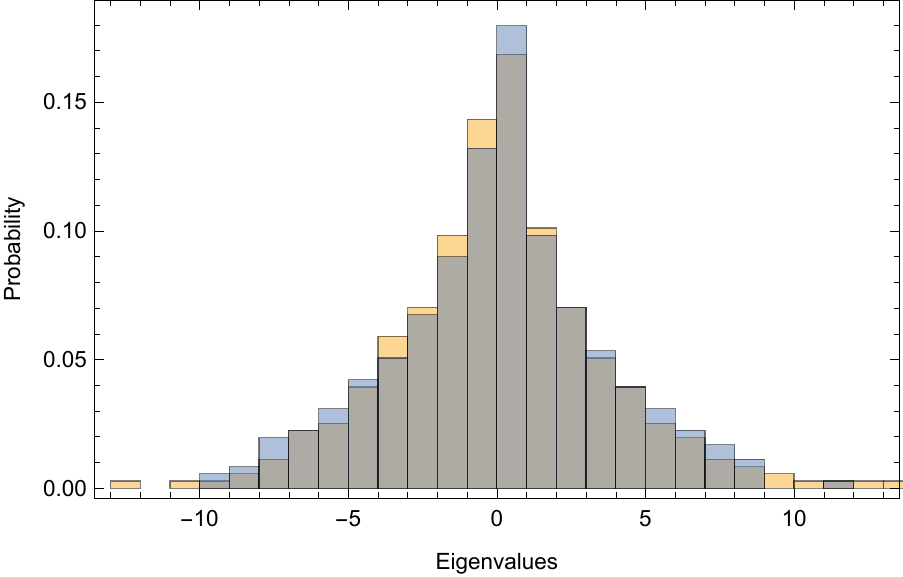}
    \caption{A histogram of the eigenvalue densities for the adjacency matrix for the reference graph $\namedtwo$ (yellow) compared that of a randomly-generated model graph (blue).}
    \end{subfigure} \hfill
    \begin{subfigure}[t]{0.4\linewidth}
    \includegraphics[width=\linewidth]{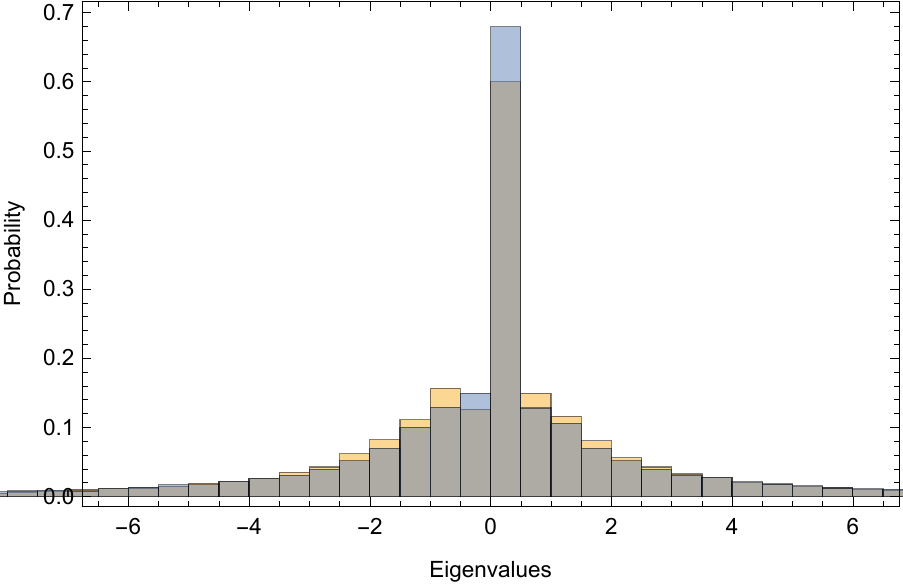}
    \caption{A histogram of the eigenvalue densities for the adjacency matrix for the reference graph $\fullone$ (yellow) compared that of a randomly-generated model graph (blue).} 
    \end{subfigure} \hfill
  \caption{The distribution of eigenvalues for the simulated graphs is close to the distribution of eigenvalues for the reference graphs.}
  \label{fig:eigenvalues}
\end{figure}

\subsection{Centrality Measures}

Next, we compare the centrality measure of the graphs generated by the two models and to those of the original graphs. {\it Centrality measures} are various heuristic measures to determine how important a vertex is based upon the structure of the network. There are many different centrality measures; here, we focus on three traditional measures: betweenness, closeness, and eigenvector centrality.  The {\it betweenness} of a vertex $i$ is $\sum_{u,v} f_{u,v}(i)$ where $f_{u,v}(i)$ is the proportion of shortest paths between $u$ and $v$ that contain $i$. The {\it closeness} of a vertex $i$ is $N / \sum_{j \ne i} d_{i,j}$ where $1/\infty$ is taken to be 0. And the {\it eigenvector centrality} of vertex $i$ is given by the principle eigenvector of the adjacency matrix of the network corresponding to the maximum eigenvalue; Perron-Frobenius theory guarantees such an eigenvalue exists and its eigenvector is nonnegative. 

We focus on these centrality measures, as they generally capture distinct aspects of the graph including local motifs (eigenvector centrality), distance (closeness), and flow (betweenness). As such, these measures are typically hard to replicate jointly. Figure \ref{fig:centrality} shows the plots for eigenvector centrality, betweenness, and closeness. For $\namedtwo$,  the simulated network emulate the centralities of the reference graph very well. On the other hand, for $\fullone$, the centralities of the simulated network do not match as well. We attribute this to the fact that the network data originates from focusing on select nodes (neurons), and that $\fullone$ is mostly-star like. On the other hand, the model we present here does not indicate any focused nodes. This discrepancy effects the local structure of the simulated graph which in turn, affects the centralities measures of the vertices. In contrast, the named graph, is effectively an induced subgraph on the selected nodes.

\begin{figure}[h!]
  \centering
    \begin{subfigure}[t]{0.4\linewidth}
    \includegraphics[width=\linewidth]{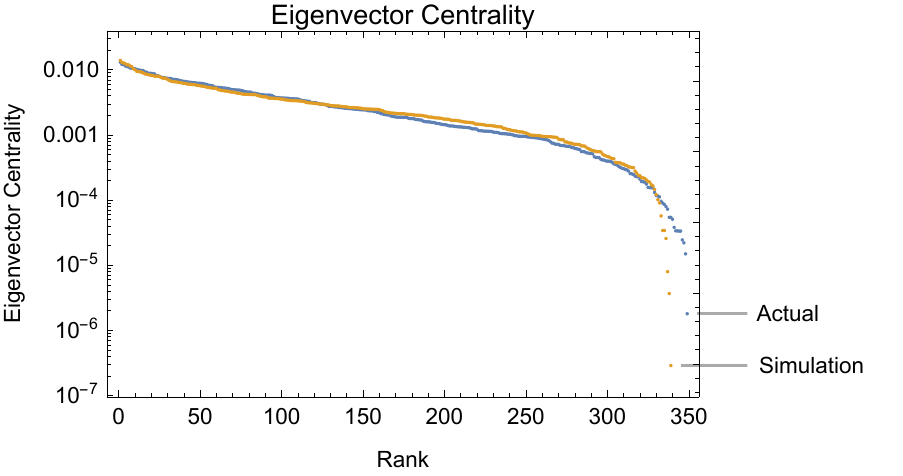}
    \caption{Eigenvector centrality of the Named graph compared to a randomly generated graph generated by the corresponding model.} 
    \end{subfigure} \hfill
    \begin{subfigure}[t]{0.4\linewidth}
    \includegraphics[width=\linewidth]{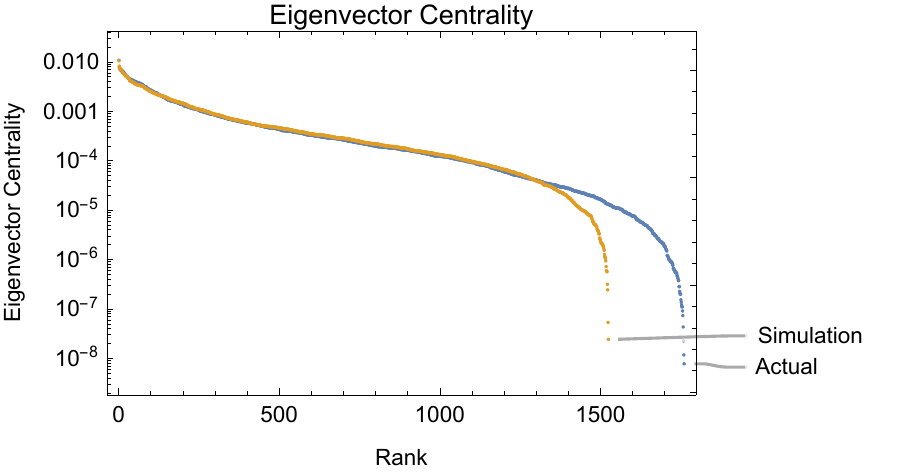}
    \caption{Eigenvector centrality of the Full connectome compared to a randomly generated graph generated by the corresponding model.}
    \end{subfigure} \hfill
    \begin{subfigure}[t]{0.4\linewidth}
    \includegraphics[width=\linewidth]{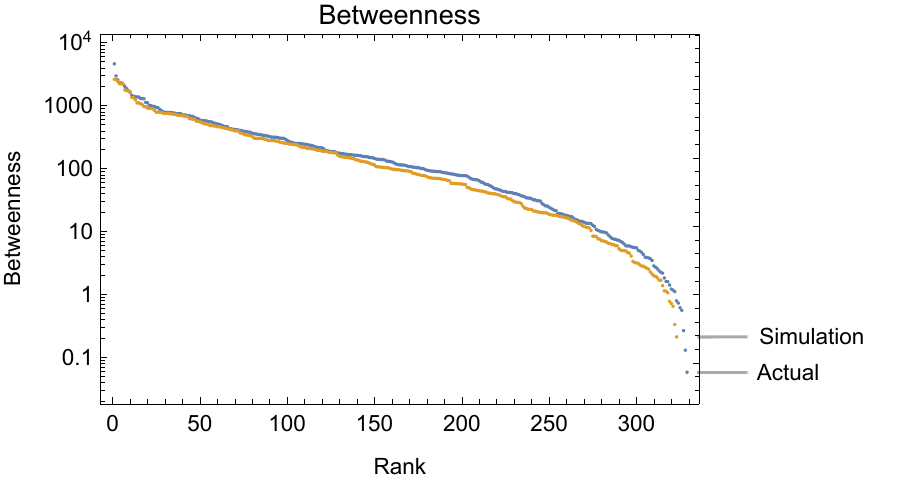}
    \caption{Betweenness measure of vertices in the Named graph compared to a randomly generated graph generated by the corresponding model.} 
    \end{subfigure} \hfill
    \begin{subfigure}[t]{0.4\linewidth}
    \includegraphics[width=\linewidth]{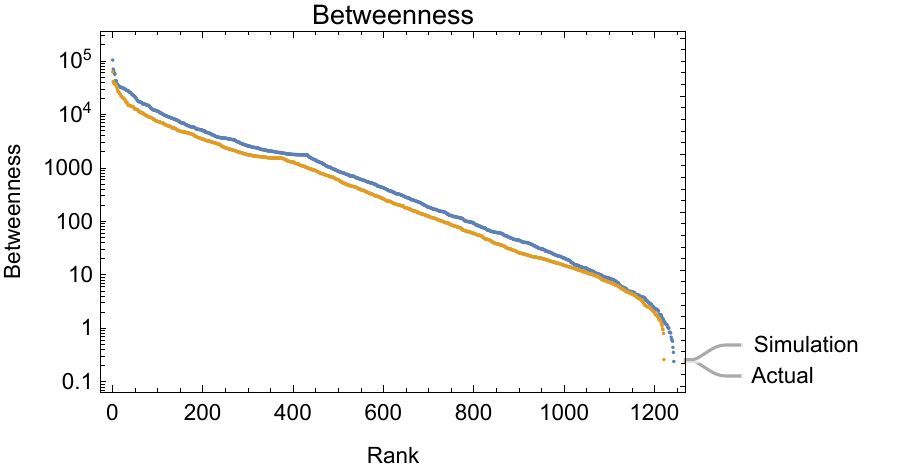}
    \caption{Betweenness measure of vertices in the Full connectome compared to a randomly generated graph generated by the corresponding model.}
    \end{subfigure} \hfill
    \begin{subfigure}[t]{0.4\linewidth}
    \includegraphics[width=\linewidth]{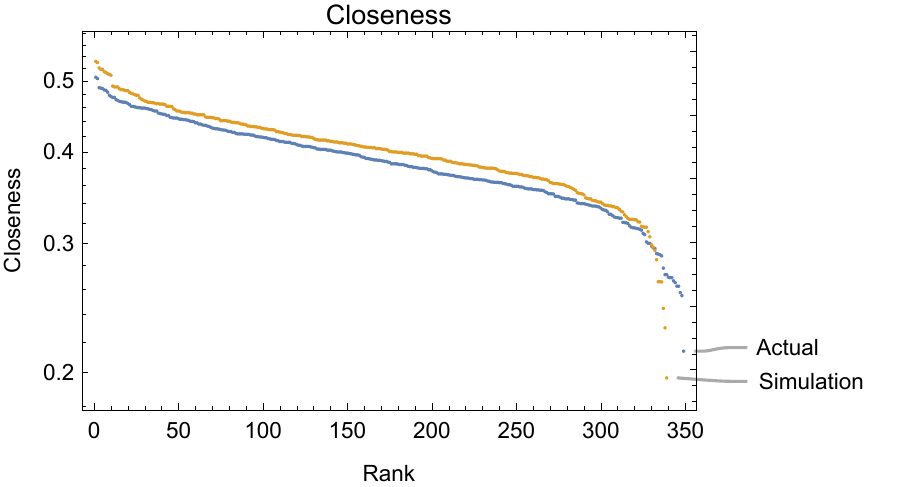}
    \caption{Closeness of vertices in the Named graph compared to a randomly generated graph generated by the corresponding model.} 
    \end{subfigure} \hfill
    \begin{subfigure}[t]{0.4\linewidth}
    \includegraphics[width=\linewidth]{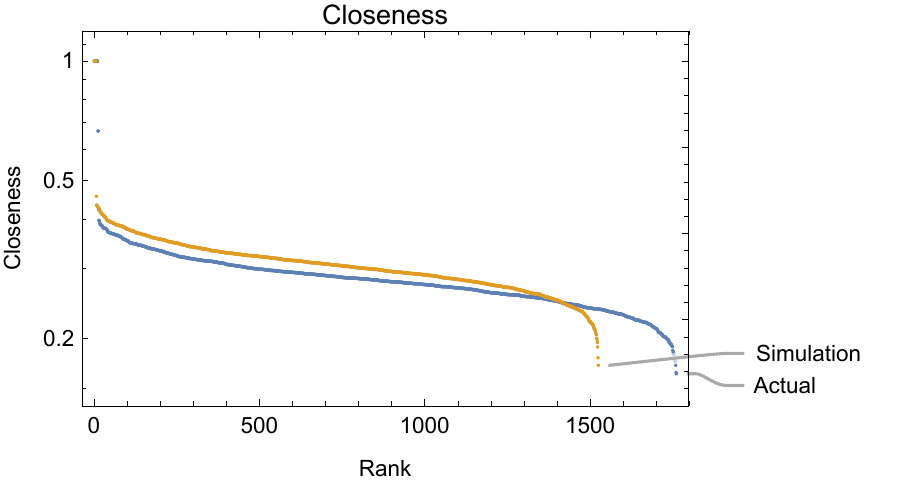}
    \caption{Closeness of vertices in the Full connectome compared to a randomly generated graph generated by the corresponding model.}
    \end{subfigure} \hfill
  \caption{Centrality comparisons of a representative sample from the model built from the Named graph (left) or the Full graph (right) and the reference graph. In all these cases, the centrality measures, sorted by rank, mimic that of the reference graph.}
  \label{fig:centrality}
\end{figure}

\section{Comparison to other models \label{sec:othermodels}}

The model built in this paper is built to accommodate the biological structure of neurons, which are more likely to form connections with near by neurons than ones further away \cite{connectometutorial, spatialfor, spatialagainst}. While much of this work has been conducted on a meso scale, in this paper  we have shown a correlation between the distance between two nodes and the presence of an edge between them (Figures \ref{fig:df1df2fits} and \ref{fig:polycompare}). Even so, it is plausible that a simpler network model with fewer parameters generates the network which is later embedded spatially in a more optimized way (e.g., like a spring network).

The logistic fit for $\hat F_1$ and $\hat F_2$ in this generic geometric Chung-Lu model is much more complicated than existing models such as the classical Chung-Lu model and the polynomial decay models such as GIRG model (as mentioned in Section \ref{sec:litreview}).
However when we compare our logistically fit generic geometric model to both the Chung-Lu model and the polynomial decay models, we see that the our model does indeed have a better fit.



 The classical Chung-Lu model has no consideration for the physical geometry of the vertices. Hence, the nature of clustering should be distinct in the classical model over geometric ones. The Chung-Lu model generates a network whose nodes are substantially closer together than the logistic-geometric model, resulting in a substantially higher closeness centrality. Figure \ref{fig:closenesscomparison} illustrates that the closeness centrality is nearly-uniformly higher in the classical Chung-Lu model. This stands in direct contrast to the closeness of the logistic-geometric model (see Figure \ref{fig:centrality}) which far better mimics the reference graphs. 

Figure \ref{fig:trianglecomparison} compares the number of triangles each vertex is involved in for a single sample graph from each model compared to the reference graph, and to a random graph generated by a simple Chung-Lu model. 

\begin{figure} [h!]
  \centering
    \begin{subfigure}[t]{0.4\linewidth}
    \includegraphics[width=\linewidth]{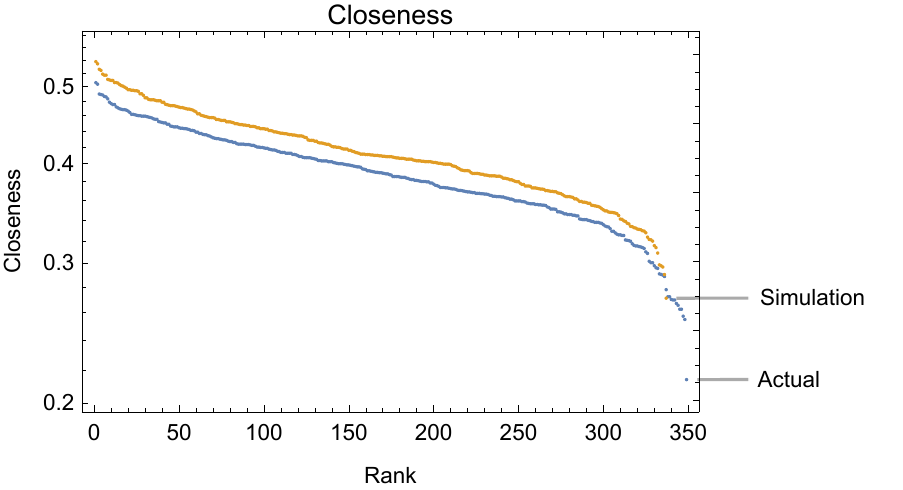}
    \caption{The closeness centrality of a random graph based on the classical Chung-Lu network using $\namedtwo$ as reference.} 
    \end{subfigure} \hfill
    \begin{subfigure}[t]{0.4\linewidth}
    \includegraphics[width=\linewidth]{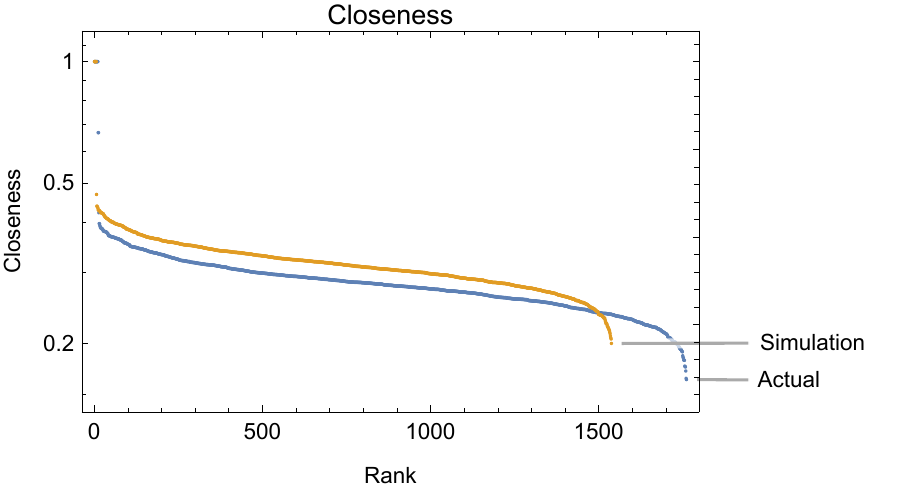}
    \caption{The closeness centrality of a random graph based on the classical Chung-Lu network using $\fullone$ as reference.} 
    \end{subfigure} \hfill
  \caption{Closeness centrality of the classical Chung-Lu model for the reference graphs $\namedtwo$ and $\fullone$. The closeness of the Chung-Lu model is significantly higher than the reference graph compared to the generalized geometric model as seen in \ref{fig:centrality}.}
    \label{fig:closenesscomparison}
\end{figure}

\begin{figure} [h!]
  \centering
    \begin{subfigure}[t]{0.4\linewidth}
    \includegraphics[width=\linewidth]{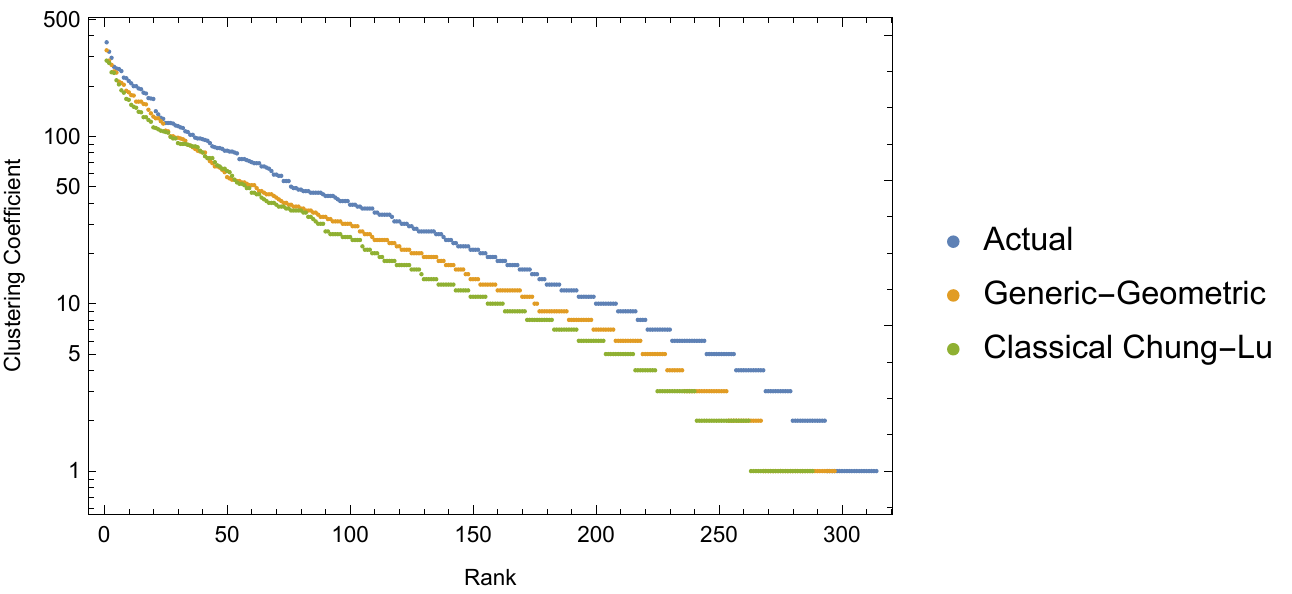}
    \caption{Number of triangles each vertex is involved in for the model generated by the Named graph, a Chung-Lu model of the Named graph, and the Named graph. } 
    \end{subfigure} \hfill
    \begin{subfigure}[t]{0.4\linewidth}
    \includegraphics[width=\linewidth]{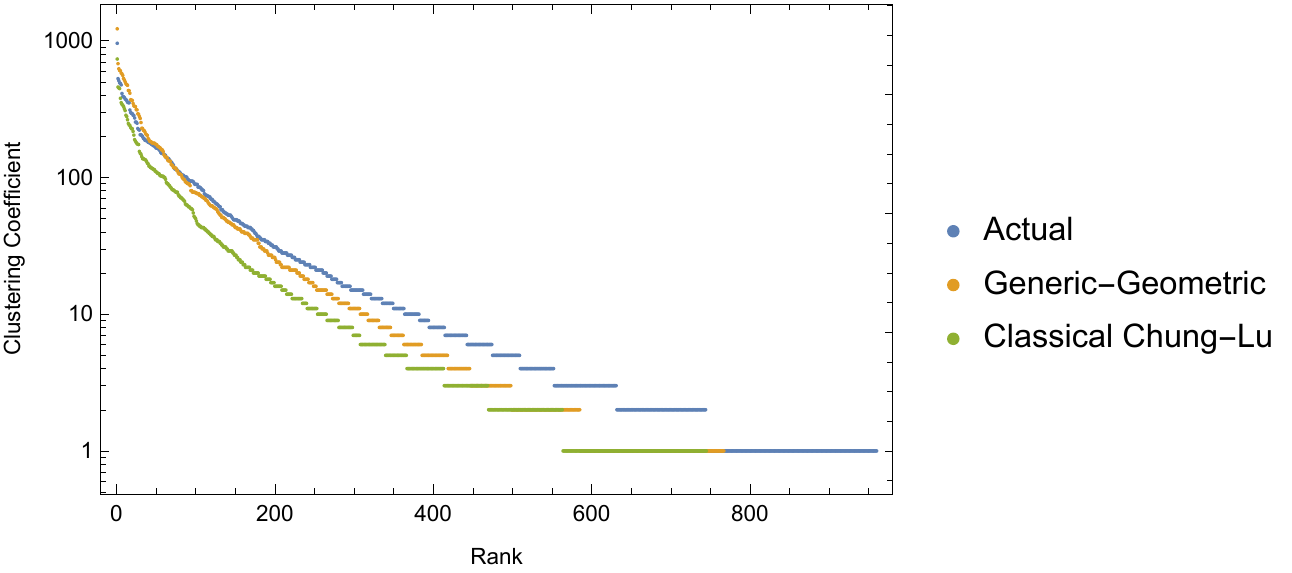}
    \caption{Number of triangles each vertex is involved in for the model generated by the Full connectome, a Chung-Lu model of the Full connectome, and the Full connectome.}
    \end{subfigure} \hfill
  \caption{The clustering coefficients (or ``triangle degree''), sorted by rank, of the original connectome, minus two vertices with high degree (blue), a simulated graph based on the generalized geometric (orange) and a simulated one based on the original Chung-Lu Model (green). This is done for the Named graph (a) and the Full connectome (b). Observe the triangle degrees for the generalized geometric model are closer to that for the reference graph than the Chung-Lu model. }
  \label{fig:trianglecomparison}
\end{figure}

Previous geometric generalizations of the Chung-Lu model (i.e., GIRG. etc.) have utilized connection functions with polynomial decay. In Figure \ref{fig:polycompare} illustrates that the best fits for a connection function with polynomial decay $(e.g., k x^{-\beta})$ are unable to capture the appropriate decay rate for longer distance functions.

\begin{figure} [h!]
\centering
\begin{subfigure}[t]{0.4\linewidth}
\includegraphics[width=\linewidth]{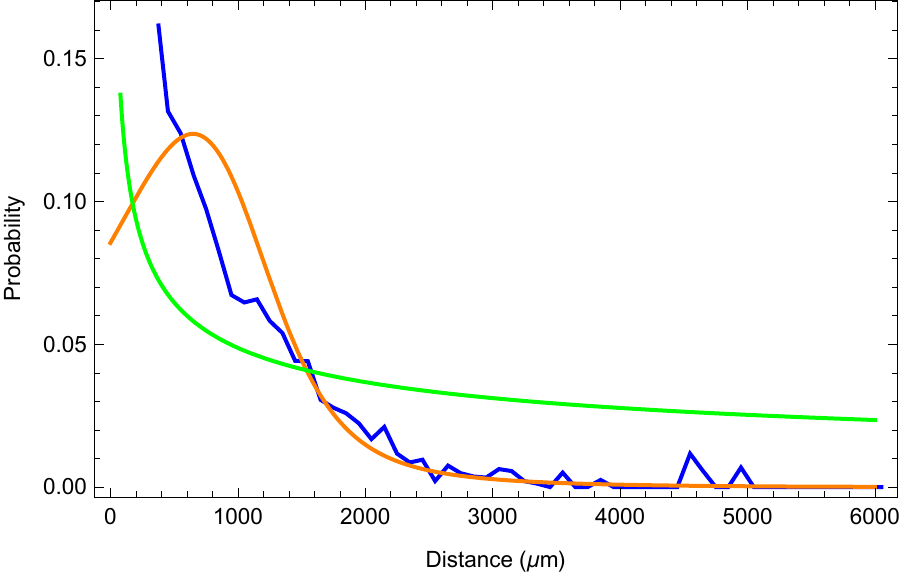} 
\caption{Plot of the empirical probability $y=P(i\sim j | d_{i,j} = x)$ (blue), the modelled function $F_1'(x)/F_2'(x)$ (orange) and the best inverse power fit $y= k x^{-\beta} = 18.0418 x^{-0.8088}$ (green) for $\namedtwo$.}
\end{subfigure} \hfill
\begin{subfigure}[t]{0.4\linewidth}
\includegraphics[width=\linewidth]{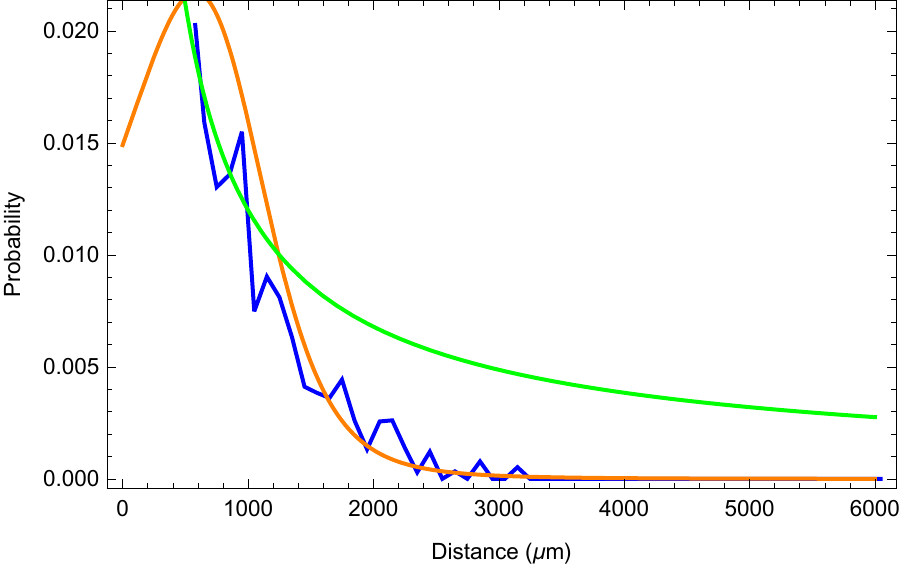}
\caption{Plot of the empirical probability $y=P(i\sim j | d_{i,j} = x)$ (blue), the modelled function $F_1'(x)/F_2'(x)$ (orange) and the best inverse power fit $y= k x^{-\beta} = 1.40837 x^{-0.732098}$ (green) for $\fullone$.}
\end{subfigure} \hfill
\caption{The empirical probabilities of edges given distance (blue) versus modeled probabilities using both the logistic model (orange) and an inverse power model (green). Observe that the inverse power model is unable to effectively model the edge-distance probabilities.}
\label{fig:polycompare}
\end{figure}

Finally, we end with a note on the nonmonotonicity of the connection function developed in this paper (see Figures \ref{fig:df1df2fits} and Figure \ref{fig:polycompare}). A nonmonotonic connection function may appear to be unintuitive. However, Figure \ref{fig:df1df2fits} shows that both graphs $\fullone$ and $\namedtwo$ have a non-monotonic $P(i\sim | d_{i,j}, \; \hat\rho_i, \; \hat\rho_j)$.  Specifically, Tables \ref{tab:nonmonotone} and  \ref{tab:nonmonotone2} shows that for both $\namedtwo$ and $\fullone$ there is an increase in the probabilities for connections in the short range while this probability decreases as the distances get large. For another example, probabilities in endemic networks have been observed to be non-monotonic and logistic-like \cite{lang2018analytic}. 

Mathematically, the ratio of the derivative of two logistic functions (e.g., $\frac{F_1'(x)}{F_2'(x)}$) has the form
 \[\left. K~ \frac{ exp(\alpha_1+\beta_1 x) }{ (1+exp(\alpha_1+\beta_1 x))^2 } \right/ \frac{ exp(\alpha_2+\beta_2 x) }{ (1+exp(\alpha_2+\beta_2 x))^2 } \]
which will be nonmonotonic and have a local maximum whenever $\beta_1 < \beta_2 < 0$ as is the case here. Upon considering this, we should expect the connection function to be nonmonotonic for small distances.

\begin{table}
\centering
\begin{tabular}{|c|c|}
\hline
 $P(i \; \text{self-loop})$     & .199  \\ \hline
 $P(i\sim j \;|\: 0<d_{ij}<200) $ & .270 \\ \hline
\end{tabular}
\caption{Empirical probabilities for connections of short length in $\namedtwo$. Observe the non-monotonic nature of the empirical probabilities.}
\label{tab:nonmonotone}
\end{table}

\begin{table}
\centering
\begin{tabular}{|c|c|}
\hline
 $P(i \; \text{self-loop})$     & .0.0727\\ \hline
 $P(i\sim j \;|\: 0<d_{ij}<100) $ & 0.107 \\ \hline
  $P(i\sim j \;|\: 100<d_{ij}<200) $ & 0.0435 \\ \hline
\end{tabular}
\caption{Empirical probabilities for connections of short length in $\fullone$. Observe the non-monotonic nature of the empirical probabilities.}
\label{tab:nonmonotone2}
\end{table}

\section{Conclusion}

In this paper, we have developed a hybrid model for spatial networks with heterogeneous nodes, which we call the generic geometric Chung-Lu model. The connection function for this model is given in Theorem \ref{res:edge|dist,valence}. This model is designed specifically to model real world situation, where key  assumptions from the graph theory literature, namely, uniform distribution of the nodes on a torus, is relaxed.

After relaxing these constraints, we use this model to generate networks that are similar to the connectome Drosophila Medulla. Connectomes are a natural candidate for this type of geometric model with heterogeneous nodes because of the biological evidence both that different types of neurons have different number of connections to other neurons, and that, due to energy conservation, neurons are more likely to connect to other neurons that are physically closer.  We find that the model developed in this paper does a particularly good job of modeling the features of the graph that control the information flow across the network (i.e. eigenvalues, and centrality measures). 

The approach taken in this paper is particularly well suited for generating graphs with similar information flow properties to a given handful of reference graphs. By relaxing the distributional and periodic constraints on the model, we allow for a paradigm where one can learn the distribution of nodes and the connections function from the reference data. In this manner, the work in this paper can be adapted to a large number of scenarios where there is reason to believe that the network has heterogeneous nodes and has a spatial component to its connection function. For instance, ground based and air transport networks are two other classes of data sets that include distances between the nodes. Both data sets have key features that differentiated it from the assumptions in this paper. (For instance, the there are non-planarity costs in all rail networks, and air networks are both more significantly and much more explicitly non-uniform than the connectome studied here.) However, we believe that, given the assumptions made in this paper, the model developed here can synthesize these real world networks with greater accuracy than currently existing models.

\bibliographystyle{plain}
\bibliography{allbib}

\end{document}